\theoremstyle{plain}
\newtheorem{remark}{Remark}
\newtheorem{proposition}{Proposition}
\theoremstyle{definition}
\newtheorem{definition}{Definition}
\newcommand{\R}{\mathbb{R}}
\begin{document}

\title{Imaging of seabed topography from the scattering of water waves}

\author{Adrian Kirkeby}
\affil{Simula Research Laboratory, Norway \\
\small{\href{adrian@simula.no}{adrian@simula.no} }}
\maketitle

\begin{abstract}
We consider the problem of reconstructing the seabed topography from observations of surface gravity waves. We formulate the problem as a classical inverse scattering problem using the mild-slope equation and analyze the topographic dependence of the forward and inverse problem. Moreover, we propose a useful model simplification that makes the inverse problem much more tractable. As water waves allow for observations of the full wave field, it differs quite a lot from the classical inverse scattering problems, and we utilize this to prove a conditional stability result for the inverse problem. Last, we develop a simple and fast numerical inversion method and test it on synthetic data to verify our analysis. 
\end{abstract}

\section{Introduction and mathematical model}

The task of mapping the ocean depth has a long and interesting history (cf. \cite{dierssen2014bathymetry}), and accurate maps of the seabed topography (also called bathymetry) are important for safe sea travel, off-shore-based industry and infrastructure, geological exploration, oceanography, and many other purposes  \cite{wolfl2019seafloor}. The most common method for depth estimation is sonar, where the bottom reflects an acoustic pulse, and the depth is computed from the travel time. This method is robust and accurate but requires scanning large ocean parts with a vessel and can be both time-consuming and dangerous, especially in coastal and shallow waters \cite{lyzenga1985shallow,wolfl2019seafloor}. 

Another method to infer the seabed topography is by using satellite/airborne observations. Such techniques are classified as either passive or active (cf. \cite{shah2020review} for a recent review). In passive methods, one relies on pure (possibly multi-spectral) optical data, where one can directly observe the bottom through the water column. Based on knowledge of the optical properties of water and various other physical processes, the local depth can be calculated from the pixel intensity \cite{lyzenga2006multispectral}. In active methods like LiDAR or SAR, focused electromagnetic radiation penetrates the water, and the recorded scattered field is used to reconstruct the topography \cite{lyzenga1985shallow,wiehle2019automatic}. These methods can be very effective, but a drawback is that their success depends on the microscopic properties of the water; if the water has a high concentration of scattering and absorbing particles like algae, bubbles, minerals, etc., the incoming electromagnetic waves are quickly absorbed as they penetrate the water, and the reflected signal contains very little information about the water column and the bottom \cite{shah2020review}. 

The approach to topography estimation taken in this paper is quite different; we do not try to bypass the barrier posed by the water to get direct information about the bottom; instead, we analyze the causal relationship between the bottom topography and the surface waves, and aim to reconstruct the topography from observations of the waves. This approach is not new, and there seem to be two main approaches to solving it. The first, which we will call the dispersion method, utilizes the linear dispersion relation for linear waves over constant depth $d$, i.e., the formula $\omega^2 = gk\tanh(kd)$. Assuming that this relation remains valid also for waves over variable depth $d(X)$, one then uses various methods to extract local estimates of the spatially varying wavenumber $k$ or the wave velocity $c = \omega/k$, from video or images the surface waves. Given such data, one proceeds by inverting the dispersion relation to get an estimate of $d(X)$. An early reference for this approach, going back to World War II, is \cite{williams1947determination}. More recent work is found in, e.g., \cite{piotrowski2002accuracy,stockdon2000estimation,plant2008ocean,holman2013cbathy}.  

The second strategy is based on partial differential equations (PDEs), and we will refer to it as the PDE method. From an inverse problems point of view, this is a very natural strategy: one assumes some known PDE models the physics and that the unknown quantity is some parameter (a coefficient, source term, boundary, or initial condition) in that PDE. The measurement is (part of) the solution to the PDE, and the inverse problem is posed as inverting the PDE to find the unknown quantity. For example, in \cite{vasan2013inverse}, the authors consider the inversion of the fully non-linear water wave equations: this is a formidable problem, and they show that it is possible to determine the topography from measuring the wave amplitude, velocity, and acceleration, and conduct 1D reconstructions numerically. They also note that the problem is severely ill-posed but do not analyze this further. In \cite{fontelos2017bottom}, the non-linear problem is studied as an optimization problem, and it is shown that one can uniquely determine the topography from observations of the wave amplitude and velocity potential. Also here, ill-posedness is discussed, but no conclusions are reached. Other papers using PDE methods, some of which will be mentioned later, are \cite{nicholls2009detection,vasan2021ocean,wu2023adjoint,lecaros2020stability,gessese2011reconstruction}.

The benefit of the dispersion methods is that they are simple to use, quite robust, and require only image/video data. The drawback is that they are based on the rather \emph{ad hoc} local dispersion relation, so it is unclear under which physical conditions this model is suitable. Moreover, due to the uncertainty principle for localized frequency estimation (cf. \cite{grochenig2013foundations}), there are limitations to how accurately one can estimate the local wavenumber and the spatial resolution of the reconstructed data will be limited by this \cite{piotrowski2002accuracy}. 

The benefit of PDE methods is that PDEs can capture much more of the complex behavior of water waves, i.e., serve as better models for the physics, and therefore also allow for more accurate determination of the topography. The drawback is that the PDE models are much more complicated to analyze and solve numerically and require very accurate measurements of physical quantities that are not easily measured, like surface acceleration or surface velocity potential. 

\subsection*{Contribution}
The approach taken in this paper lies somewhere in between the dispersion methods and the PDE methods. We model the surface wave using a PDE that depends on the topography in a non-linear, dispersive\footnote{By dispersive dependence, we mean that waves of different wavelengths depend differently on the topography.} manner, but eventually, we will end up inverting a dispersion relation to estimate the topography. This approach has several advantages; first of all, it is clear from the beginning what assumptions are made for the model to be valid, and so the applicability and limitations of the model are clear. Second, we can give a rather precise stability estimate on the reconstructed topography in the case of noisy measurements. As far as we know, no such estimates have been given previously for the topography inversion problem. Moreover, we assume that we can measure the wave amplitude only, and our analysis results in a simple and fast reconstruction method based on a rigorous simplification of the underlying PDE that does not rely on localized Fourier- or wavelet analysis. The drawback is that the limitations of the model are somewhat restrictive in that the depth should not vary very fast with respect to the wavelength and that it is a linear model, so the waves should not be too steep. \\

The paper is organized as follows: In Section \ref{sect: mathematical model}, we introduce the mathematical model and the underlying assumptions. In Section \ref{sect: wave scattering}, we first analyze how our model depends on the topography. We then formulate and analyze the (forward) scattering problem for water waves by seabed topography. Moreover, we show that for longer waves, one can simplify the PDE while only introducing a small error; this is valuable as it greatly simplifies the inverse problem. In Section \ref{sect: inversion}, we analyze the inverse problem; Propositions \ref{prop: uniqueness} and \ref{prop: stability} are the main results of the paper and show uniqueness and conditional Hölder-stability of the inverse problem. In Section \ref{sect: numerical}, we propose a simple, fast inversion method and test it on synthetic data, and in Section \ref{sect: conclusion}, we summarize our findings and propose some ideas for further inquiry.

\subsection{Mathematical model}
\label{sect: mathematical model}
We consider an incompressible, irrotational fluid occupying a region 
$$ \Omega = \{ (X,z) : X = (x_1,x_2) \in \R^2, \quad - d(X) < z < 0 \}.$$
Above, the function $d(X): \R^2 \to \R$ describes the pointwise water depth, i.e., the seabed topography, and it is the mathematical object that we are interested in estimating. 
We assume that $d(X)$ is on the form $d(X) = H_0 - h(X)$, where $H_0$ is the mean depth and $h(X)$ accounts for the topography. Moreover, we assume $h(X)$ is zero outside some compact set $\Omega$ and that $d(X)  > 0 $. We will refer to $d(X)$ as the topography or depth. 

Under the above conditions, the fluid velocity can be described by the gradient of a harmonic potential $\Phi(X,z,t)$, and the linearized equations governing the fluid-wave system read (cf. \cite{waterwavesprob,ablowitz2011nonlinear})
\begin{equation}
    \begin{cases}
    \partial_t\eta - \partial_z \Phi|_{z = 0} &= 0 , \quad X \in \R^2,\\
     \partial_t \Phi|_{z= 0} + g\eta &= 0, \quad X \in \R^2,\\
    \qquad \qquad   \Delta\Phi &= 0, \quad (X,z) \in \R^2\times(-d(X),0), \quad \partial_\nu \Phi = 0, \quad (X,z) \in \R^2 \times -d(X). 
    \end{cases}
    \label{eq:lin surf}
\end{equation}
Here $\eta(X,t)$ is the surface wave amplitude, $\partial_\nu$ is the upward pointing normal derivative and $g$ is the gravitational acceleration. For a wave $\eta \sim a \exp(\bm{k}\cdot X - ct)$ with wavenumber $\bm{k} \in \R^2$ and amplitude $a$, the linearization of the free-surface Euler equations above is obtained under the assumption that $ak \approx |\nabla \eta| \ll 1$, where $k = |\bm{k}|$, and $a \ll \text{min } d(X)$. That is, the waves should not be to steep, and the amplitude should be smaller than the minimal depth.  Figure \ref{fig: 3dwave} illustrates the situation. 

\begin{figure}
    \centering
    \includegraphics[width = 0.9\textwidth]{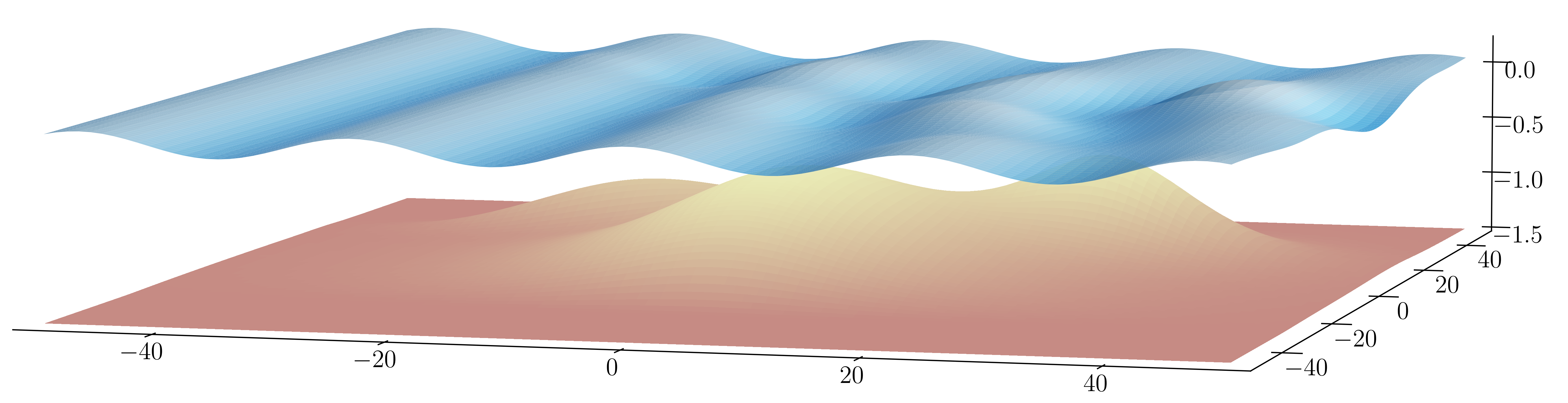}
    \caption{Plot of an incoming wave scattered by the topography. The wave is the solution to the mild-slope equation with the shown topography. The units on all axes are meters. Note that the aspect ratio is skewed for illustration purposes.}
    \label{fig: 3dwave}
\end{figure}

\subsection{Waves over variable topography and the mild-slope equation}
It is common in the analysis of water waves to reduce equation \eqref{eq:lin surf} to a non-local system of equations on the boundary by introducing the Dirichlet-to-Neumann (DN) operator $G(d)$, defined as 

\begin{align*}
G(d)\varphi = \partial_z \Phi|_{z= 0} \quad \text{where } \Phi \text{ solves } 
\begin{cases}
    &\Phi|_{z = 0} = \varphi, \quad  \Delta\Phi = 0 \quad (X,z) \in \Omega, \\
    &\partial_\nu \Phi = 0 \quad X \in \R^2, z = -d(X).
\end{cases}  
\end{align*}
We can then express the water waves equations in the so-called Zakharov-Craig-Sulem formulation \cite{waterwavesprob}:
\begin{align*} 
\begin{cases}
    \partial_t \eta &= G(d)\varphi, \\
    \partial_t \varphi &= -g\eta,
\end{cases}
\end{align*}
equipped with suitable initial conditions. Many people have analyzed the influence of topography on water waves using this formulation. For example, the paper \cite{craig2005hamiltonian} studies the effect of rough, periodic bottoms on (non-linear) long waves, and in \cite{andrade2018three,milewski1998formulation}, an explicit construction of the operator is presented. Moreover, in the paper \cite{nicholls2009detection}, the topography inversion problem in 1D is studied by using an expansion method for the DN operator. We refer to Chapter 3 in \cite{waterwavesprob}) for more details on the properties of depth dependence of the DN operator. 

However, the non-local nature of the DN operator and its strongly non-linear dependence on the topography makes for a less-than-optimal choice of model for the inverse problem. Instead, we choose to use the so-called \emph{mild-slope equation} (MSE). The MSE is an elliptic equation describing the scattering of a time-harmonic wave over slowly varying topography. More explicitly, it is derived from equation \eqref{eq:lin surf} under the assumption that 
\begin{equation}
     \frac{|\nabla d|}{d} \leq \delta k, \quad \text{where} \quad  \delta \ll 1. 
    \label{eq: mild slope condition}
\end{equation}
This condition says that the spatial variation in depth should be small relative to the wavelength: in terms of the wavelength $\lambda = 2\pi/k$ it reads $\frac{|\nabla d|}{2\pi d} \lambda \ll 1$, i.e., the ratio $$\frac{\text{bottom steepness}\times \text{wavelength}}{\text{depth}}$$ should be small for the MSE to be a good approximation.   

To derive the MSE, one first assumes that the wave and velocity potential is time-harmonic with frequency $\omega$, i.e., $\eta(X,t) = \tilde{\eta}(X)\exp(i\omega t)$ and $ \Phi(X,z,t) = \tilde{\Phi}(X,z)\exp(i\omega t).$ By taking $\Phi$ to be separable of the form $\varphi(X)f(X,z)$ with $f(X,z) = \cosh(k(z+d(X))/\cosh(kd(X))$, where $k$ satisfies $\omega^2 = gk\tanh(kd(X))$, one can then depth-integrate \eqref{eq:lin surf} and discard $\mathcal{O}(\delta^2)$ terms to find that the spatial component of $\tilde{\eta}$ satisfies
\begin{equation}
    \nabla \cdot c \nabla \tilde{\eta} + k^2c \tilde{\eta} = 0.
    \label{eq: mse}
\end{equation}
This is the mild-slope equation. The coefficient $k^2$ is the square of the magnitude of the local wave number $k$, and $k$ satisfies the dispersion relation 
\begin{equation}
    \omega^2 = gk(X) \tanh(k(X)d(X)). 
    \label{eq: dispersion}
\end{equation} 
The coefficient $c = c(X)$ is given by 
\begin{equation}
    c = \frac{\omega^2}{2k^2(X)c_0}\left(1 + \frac{2k(X)d(X)}{\sinh(2k(X)d(X))} \right), 
    \label{eq: c}
\end{equation}
where $c_0 $ is a constant\footnote{This is just a re-scaling of the MSE such that it is the constant coefficient Helmholtz equation outside the area of varying depth.} such that $c = 1$ outside $\Omega$. 
For a detailed derivation, analysis, and discussion of \eqref{eq: mse}, see \cite{dingemans2000water} or \cite{mei1989applied}.
In \cite{booij1983note}, the mild-slope equation was shown to have excellent agreement with finite element solutions of \eqref{eq:lin surf} for $\delta \leq 1/3$, and for $\delta \leq 1$ when the depth contours were parallel to the direction of wave propagation. We note that there exist different modifications to the MSE for steeper slopes (cf. \cite{porter2003mild,mei1989applied}) and that these models depend on the topography in increasingly intricate ways.

Notice that the quantity of interest in our inverse problem, i.e., the topography $d(X)$, only enters indirectly through the coefficients $k^2$ and $c$. 

\section{Water wave scattering}
\label{sect: wave scattering}
We can divide the scattering of water waves into two main phenomena: One is diffraction, which is an effect of the boundaries of the surface on the wave. Examples are the ``bending" of a wave past a corner or the spherical expansion of a wave as it passes through a small hole. The second is refraction; refraction is the changes in wave properties due to changes in the medium in which the wave propagates. An example of refraction that can easily be observed is that ocean waves often appear to approach the shore at a right angle: this is not because all ocean waves propagate with wavefronts parallel to the shore initially, but because the decrease in local wave velocity due the decrease in depth causes the wavefront to become parallel to the beach \cite{holthuijsen2010waves}. 

In this work, we are interested in refraction, especially refraction due to changes in the water depth. 
\newpage
\subsection{Topographic dependence in the MSE}
\label{section: depth dependence}

\emph{``The effect of the wave virtually disappears at a depth of about one-half wavelength, and a submarine at such depth is hardly aware of the heavy waves on the surface."} \\ 
-- Walter Munk, Waves across the Pacific, Documentary.\\
\vspace{2mm}

To get a sense of how water waves are influenced by topography, it is useful to consider the constant depth setting first. 
For a monochromatic wave with angular frequency $\omega$ propagating in the direction $\bm{k} = (k_1, k_2)$ on water with depth $H_0$, we have 
$$ \eta(X,t) = a \exp( i\bm{k}\cdot X - \omega t),$$
where the magnitude of the wavenumber $k = |\bm{k}|$ satisfies $\omega^2 = gk\tanh(kH_0)$. The corresponding bulk water velocity $U =\nabla \Phi $ is 
$$ U(X,z) = (u,v,w)^{T} =  \frac{ga}{\omega} \exp(i \bm{k}\cdot X - \omega t)  \begin{bmatrix} k_1\frac{\cosh(k(z+H_0))}{\cosh(kH_0)} \\ k_2\frac{\cosh(k(z+H_0))}{\cosh(kH_0)} \\- ik\frac{\sinh(k(z+H_0))}{\cosh(kH_0)} \end{bmatrix}.
$$
Considering, for example, the ratio of $|u(z)|/|u(0)|$ of the horizontal velocity, we find that it decays rapidly with depth towards the value $\cosh(kH_0)^{-1} \approx \exp(-kH_0)$. At $z = \lambda/2$, we have that 
$$|u(z)|/|u(0)| \approx \frac{\exp(\pi-kH_0)}{2} \approx 0 \quad \text{for} \quad kH_0 = \frac{2\pi H_0}{\lambda} \gg \pi. $$
Hence, the subsurface fluid velocity decays rapidly with depth, and the motion associated with shorter waves (larger $k$) decays faster. The kinematic interaction with the bottom at such depths is therefore negligible, and the bottom will have very little influence on the fluid motion in the water and, therefore, on the surface waves.  

In the MSE, the topography influences the waves through the coefficients $k^2$ and $c$. We recall 
\begin{equation*}
    k^2 :  \omega^2 = gk\tanh(kd) \quad \text{and}\quad c  = \frac{\omega^2}{2k^2c_0}\left(1 + \frac{2kd}{\sinh(2kd)} \right). 
\end{equation*}
Neither $c$ nor $k$ are independent parameters in the problem but functions of the topography $d$ and the frequency $\omega$. Figure \ref{fig: kofd} shows a plot of $k$ as a function of depth for some values of $\omega$, and we see that the wavenumber decays rapidly for lower depths, eventually tending toward a constant value as $d$ increases. Moreover, the decay is more rapid for higher frequencies. 

\begin{figure}
    \centering
    \includegraphics[width = 0.8\textwidth]{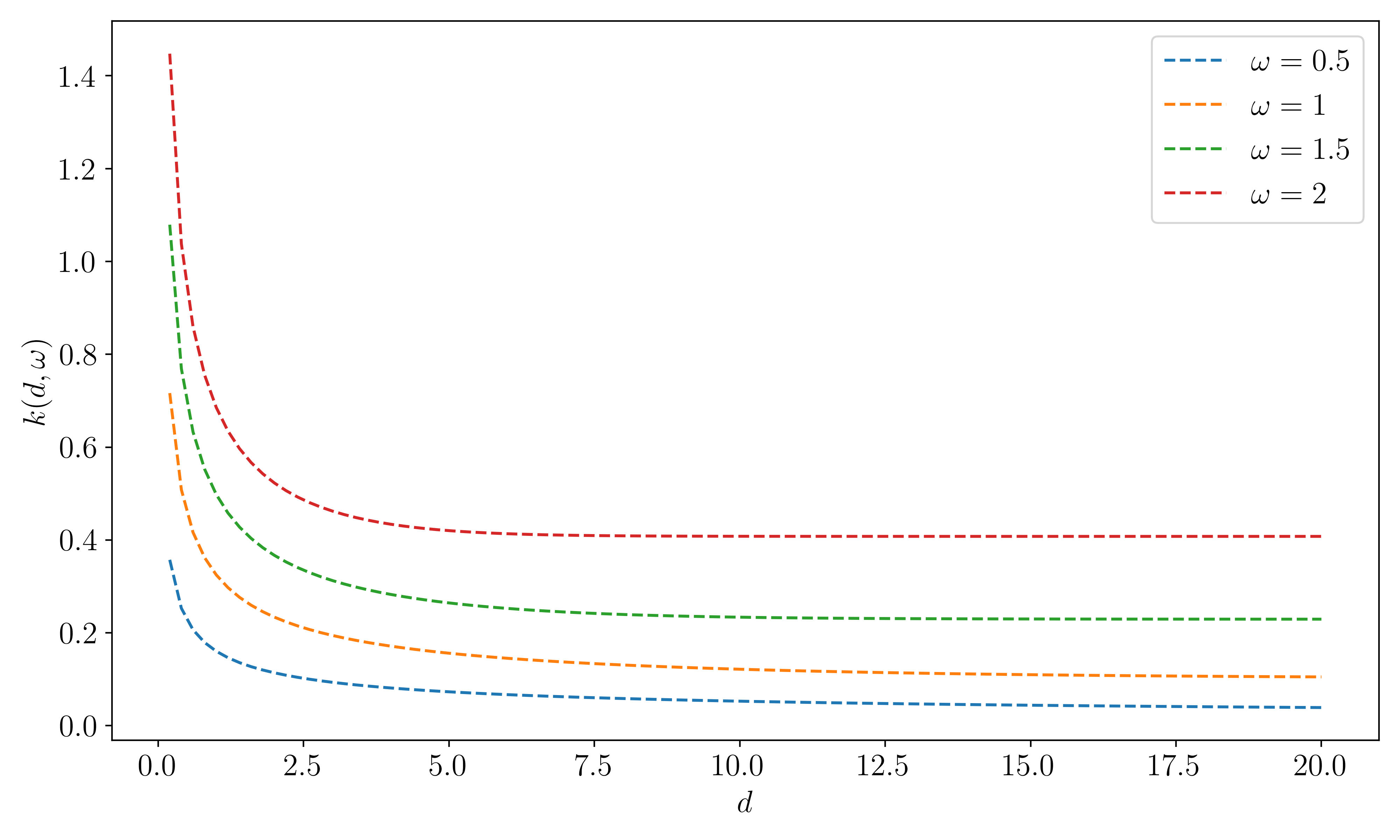}
    \caption{The figure shows the wavenumber $k$ as a function of depth (in meters), for $\omega$ in the range $0.5$ to $2$. We see that $k$ approaches the value $\mu = \omega^2/g$ as $d$ increases. }
    \label{fig: kofd}
\end{figure}

Denoting $\mu = \omega^2/g$, we have 
$$d(k) = \frac{\tanh^{-1}(\mu/k)}{k}. $$
We note the following properties of the wavenumber-to-depth function. For a given $\mu$, and for $\alpha > 0$, let ${I_\alpha = [\mu + \alpha, \infty)}$.
\begin{itemize}
    \item[i)] $d(k)$ is a bijection on the interval $I_\alpha$ and so $d$ is uniquely determined $k$ by $k \in I_a$. The case $k = \mu$ corresponds to $\tanh(kd) = 1  \implies d = \infty$. 
    \item[ii)] $d(k)$ is Lipschitz continuous on $I_\alpha$ and the Lipschitz constant $M_{\alpha,\mu}$ satisfies
    \begin{equation}
         \frac{\mu}{2(\mu + \alpha)^2\alpha}  \leq M_{\alpha,\mu} \leq \frac{3}{(\mu + \alpha)\alpha}. 
         \label{eq: lipschitz}
    \end{equation}
    Hence the Lipschitz constant is rapidly increasing as $\alpha \to 0$, showing that the reconstruction of the depth $d(X)$ from $k(X)$ becomes increasingly unstable as $k \to \mu$. Moreover, for fixed $\alpha$, $M_{\alpha,\mu}$ is a decreasing function of $\mu$. 
\end{itemize}
The two statements above are deduced by noting that $$d'(k) = k^{-2}\left(\frac{\mu k}{\mu^2 - k^2} - \tanh^{-1}(\mu/k)\right)$$
is negative and strictly increasing on $I_a$, and applying standard inequalities.  

\begin{remark}
    Writing \eqref{eq: dispersion} as 
    $$ \mu = k \tanh(kd), $$
    we see that $\mu \to k$ occurs when $\tanh(kd)\to 1$. This is the manifestation of the previously discussed diminishing influence of the bottom on the waves when either the depth is too great, the wave number is too large, or both. For a numerical example, assume $kd = 3$ and $\mu = 1$. Then $\tanh(3) \approx 0.995$ and $\mu - k  \approx - 0.005$. Assuming we have measured some noisy $k_\varepsilon = k + \varepsilon$ for some small $\varepsilon > 0$, the error in the reconstruction $d$ from $k_\varepsilon$ satisfies 
    $$ |d(k) - d(k_\varepsilon)| \leq \varepsilon M_{0.005,1} \leq  \varepsilon \times 101.$$
    This is quite a hopeless bound, as the noise is amplified 100 times. 
\end{remark}

Even though the coefficient $c$ is redundant with respect to the unique reconstruction of $d$, it might carry information that stabilizes the inversion $ (k,c) \mapsto d$. 
However, this is not the case. Combining \eqref{eq: dispersion} and \eqref{eq: c}, we find
\begin{equation*}
    d(k,c) = \left(\frac{ck}{\omega^2} - \frac{1}{2k}\right)\sinh\left(2\tanh^{-1}\left(\mu/k\right)\right) = \left(\frac{ck}{\omega^2} - \frac{1}{2k}\right)\left( \frac{2\mu k}{k^2 - \mu^2}\right) ,
\end{equation*}
and the local Lipschitz constant satisfies 
\begin{equation*}
      \|\nabla_{k,c} d \|_\infty \geq \frac{k}{\omega^2} \left( \frac{2\mu k}{k^2 - \mu^2}\right),  
\end{equation*}
which is not better than the reconstruction from $k$ alone when $k \to \mu$.

\subsection{The scattering problem}

We now formulate the problem of the scattering of water waves by topography. We consider the following situation: an incoming wave ${ \eta_{i} \sim \exp(i \bm{k}_i \cdot X)}$ satisfying ${(\Delta + k_i^2)\eta_i = 0 }$ propagates on water of depth $H_0$ towards an area $\Omega \subset \R^2$.  It is then scattered by the variable topography (represented by the depth map $d(X)$) and the total wave field $\eta = \eta_{i} + \eta_{s}$, where $\eta_{s}$ is the scattered wave, satisfies 
$$ \nabla \cdot c \nabla \eta + k^2c \eta = 0 \quad \text{in } \R^2,$$

The example we have in mind is ocean waves propagating towards an area $\Omega$ of shallow water and variable topography, e.g., near the coast, but the model is not limited to this setting. Figures \ref{fig: 3dwave} and \ref{fig: depthandwaves} illustrate the situation, and the plotted waves are produced by numerically solving the scattering problem for the MSE. 

\begin{figure}
    \centering
    \includegraphics[width = 1.1\textwidth]{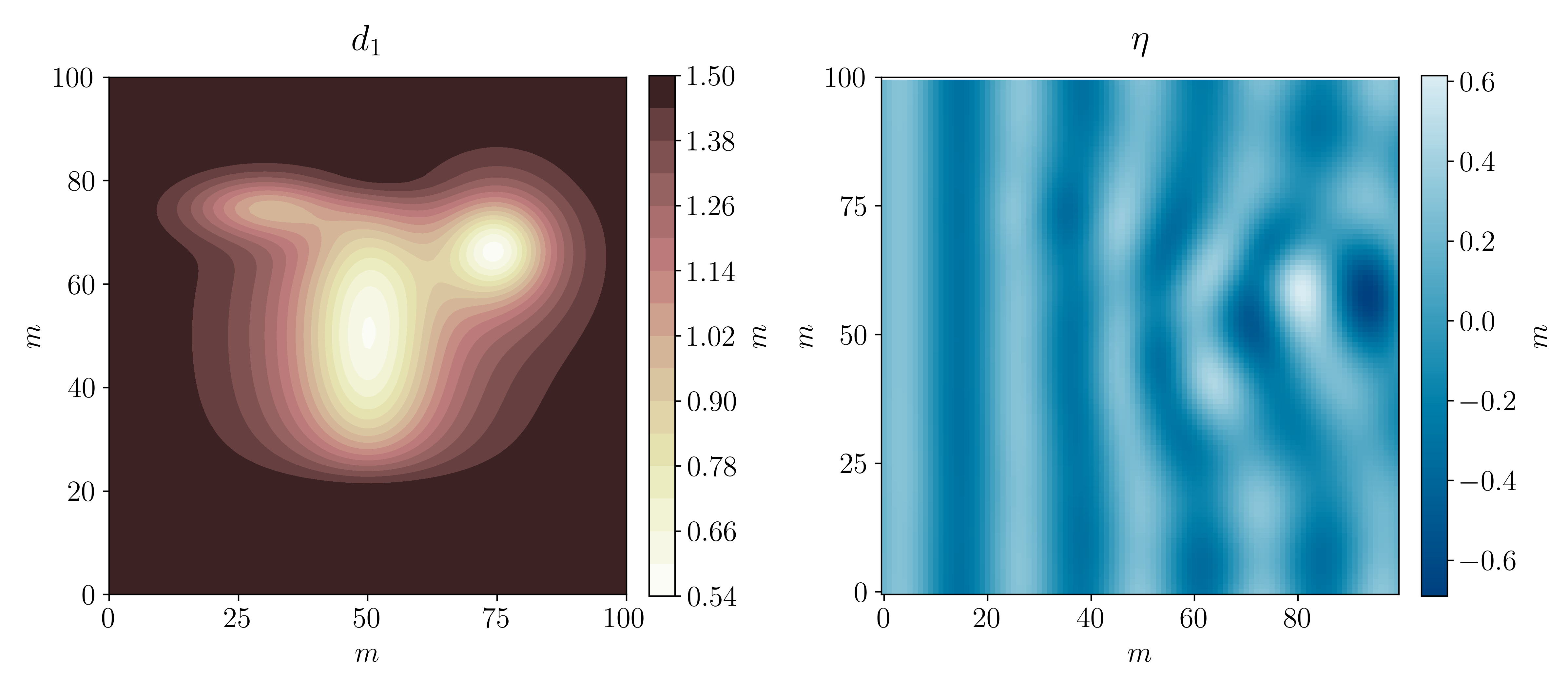}
    \caption{To the left, we see a contour plot of the depth function $d(X)$ on a $100 \times 100$ meter domain. To the right, we see the water wave $\text{Re}\{\eta(X)\}$ coming in from the left and being distorted as it travels past the variable depth. The angular frequency of the wave is $\omega = 1$, and the computation is done using the method described in Section \ref{section: duan-rokhlin} and the simplified mild-slope equation from Section \ref{section: simplification}.}
    \label{fig: depthandwaves}
\end{figure}

To guarantee a unique, outward propagating scattered wave, we require $\eta_{s}$ to satisfy the Sommerfeld radiation condition (cf. \cite{coltonkress,pike2001scattering}, or Chapter 1 in \cite{kuznet2002linear} for a physical explanation). In summary, we now have the following scattering problem: Let $\Omega \subset \R^2$ be a smooth, compact domain and $d(X) = H_0 - h(X)$ be the topography.  Assume that $h \in C^2_0(\R^2)$ with $\text{supp}(h) \subset \Omega$, that $\min d(X) = H_0 - h(X) > 0  $, and that $d(X)$ satisfies the mild-slope condition. For an incoming wave ${\eta_{i} = a\exp(i\bm{k}_i\cdot X)}$, find the scattered wave $\eta_{s}$ such that 

\begin{equation}
    \begin{cases}
        &\nabla \cdot c \nabla \eta + k^2 c \eta = 0, \quad \text{in} \quad  \R^2, \\
        &\eta = \eta_{i} + \eta_{s},\quad \text{in} \quad \R^2,  \ \\
        &\lim_{|X| \to \infty } \sqrt{|X|}\left( \partial_{|X|} - ik_i\right) \eta_{s} = 0, \quad \text{uniformly for } X/|X| \in S^1. 
    \end{cases}
    \label{eq: scattering}
\end{equation}

The scattering problem for water waves now very much resembles the classical acoustic scattering problem, see, e.g., Chapter 8 in \cite{coltonkress} (3D) or Chapter 1 on \cite{pike2001scattering} (2D), except for the appearance of the coefficient $c$ and the implicit dependence on the topography through the coefficients. By utilizing the Liouville transformation, the well-posedness of the scattering problem is therefore quickly established from existing theory.

\begin{proposition}
    For an incoming wave  $ \eta_{i} = a\exp(i\bm{k}_i\cdot X)$ there exists a unique solution ${\eta_s \in C(\R^2)}$ to the scattering problem \eqref{eq: scattering}. The total field $\eta$ is given by $$\eta = c^{-1/2} \tilde{\eta},$$ where $\tilde{\eta}$ is the solution to 
    \begin{equation}
        \tilde{\eta}(X) = \eta_i(X) - k_i^2 \int_{\R^2} G_{k_i}(X,Y)  q(Y) \tilde{\eta}(Y) \mathrm{d}Y. 
    \end{equation}
    Here, $G_{k_i}(X,Y) = \frac{i}{4}H_0^{(1)}(k_i|X-Y|)$ is the Hankel function of the first kind and order 0, and 
    $$ q = 1 - \frac{1}{k_i^2}\left(k^2 - \frac{\Delta c^{1/2}}{c^{1/2}}\right).$$
    \label{prop: wellposed}
\end{proposition}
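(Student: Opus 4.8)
The plan is to use the Liouville transformation, as the text anticipates, to convert the divergence-form equation $\nabla\cdot(c\nabla\eta)+k^2c\,\eta=0$ into a Schr\"odinger/Helmholtz-type equation $\Delta\tilde\eta+k_i^2\tilde\eta=k_i^2 q\,\tilde\eta$ with a compactly supported potential $q$, and then to invoke the classical theory of acoustic scattering by an inhomogeneous medium (Colton--Kress, Ch.~8). Concretely, I would substitute $\eta=c^{-1/2}\tilde\eta$ into \eqref{eq: mse} and compute the equation satisfied by $\tilde\eta$, the whole point of the weight $c^{-1/2}$ being to eliminate the first-order (gradient) terms.

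Carrying out the computation, I would expand $\nabla\cdot(c\,\nabla(c^{-1/2}\tilde\eta))$. The two contributions linear in $\nabla\tilde\eta$, namely $\nabla c^{1/2}\cdot\nabla\tilde\eta=\tfrac12 c^{-1/2}\nabla c\cdot\nabla\tilde\eta$ and $c\,\nabla c^{-1/2}\cdot\nabla\tilde\eta=-\tfrac12 c^{-1/2}\nabla c\cdot\nabla\tilde\eta$, cancel exactly. After dividing by $c^{1/2}$ and using the identity $\Delta c^{1/2}=\tfrac12 c^{-1/2}\Delta c-\tfrac14 c^{-3/2}|\nabla c|^2$, the remaining zeroth-order coefficient collapses to $-\Delta c^{1/2}/c^{1/2}$, leaving $\Delta\tilde\eta+(k^2-\Delta c^{1/2}/c^{1/2})\tilde\eta=0$. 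Writing $k^2-\Delta c^{1/2}/c^{1/2}=k_i^2(1-q)$ then produces exactly the stated $q$, and the equation takes the desired scattering form.

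Next I would record the properties of $q$ needed for the classical theory. It is real-valued, and it is supported in $\bar\Omega$, since outside $\Omega$ one has $k=k_i$ and $c\equiv1$ (hence $\Delta c^{1/2}=0$), so $q=0$ there. Its regularity is inherited from $h$: because $h\in C^2_0$, we have $d=H_0-h\in C^2$ with $d>0$; the dispersion relation \eqref{eq: dispersion} defines $k$ as a smooth function of $d$ by the implicit function theorem; $c$ is a smooth, strictly positive function of $(k,d)$ by \eqref{eq: c}; hence $c^{1/2}\in C^2$ and $\Delta c^{1/2}$ is continuous. Therefore $q\in C(\R^2)$ with compact support, which is precisely an admissible potential.

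With $q$ admissible, the remainder is standard. The scattering problem for $\tilde\eta$ (the PDE plus the Sommerfeld condition, with incident field $\eta_i$) is equivalent to the Lippmann--Schwinger equation $\tilde\eta=\eta_i-k_i^2\int_{\R^2}G_{k_i}(\cdot,Y)q(Y)\tilde\eta(Y)\,\mathrm{d}Y$ with outgoing kernel $G_{k_i}=\tfrac{i}{4}H_0^{(1)}(k_i|\cdot|)$; the associated integral operator is compact on $C(\bar\Omega)$ because of the weak (logarithmic) singularity of the Hankel kernel, so Riesz--Fredholm theory reduces existence to uniqueness, and uniqueness follows in the usual way from Green's identity, Rellich's lemma, and the unique continuation principle, using that $q$ is real and compactly supported. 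Finally I would transform back via $\eta=c^{-1/2}\tilde\eta$; since this map is a bijection and $c^{-1/2}\equiv1$ outside $\Omega$, the scattered part $\eta_s=\eta-\eta_i$ agrees with $\tilde\eta-\eta_i$ there and inherits the radiation condition, giving a unique $\eta\in C(\R^2)$ solving \eqref{eq: scattering}. I expect the substantive work to be the Liouville computation together with the verification that $q$ is genuinely admissible, tracking regularity through the chain $h\mapsto d\mapsto k\mapsto c\mapsto\Delta c^{1/2}$ and confirming that $c$ stays bounded away from zero so that $\Delta c^{1/2}/c^{1/2}$ is continuous; once that is in place, the existence/uniqueness argument is entirely classical.
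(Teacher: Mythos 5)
Your proposal is correct and follows essentially the same route as the paper: the Liouville transformation $\eta = c^{-1/2}\tilde{\eta}$ to reduce the MSE to a Helmholtz equation with compactly supported potential $q$, then the Lippmann--Schwinger formulation, compactness of the weakly singular integral operator, and Riesz--Fredholm theory with uniqueness for the radiating Helmholtz problem. The only differences are matters of detail: you carry out the cancellation of first-order terms and the regularity chain $h\mapsto d\mapsto k\mapsto c\mapsto \Delta c^{1/2}$ explicitly (the paper simply asserts $c\in C^2$, $c>0$), and you sketch the uniqueness argument via Rellich's lemma where the paper cites the literature.
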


\begin{proof}
First, note that $c(X) > 0 $ and  $c \in C^2$. We apply the Liouville transformation to $\eta$: setting $\tilde{\eta} = c^{1/2}\eta$, one finds that $\tilde{\eta}$ satisfies
\begin{equation}
     \Delta \tilde{\eta} + \tilde{k}^2 \tilde{\eta} = 0,
\end{equation}
where $\tilde{k}^2= k^2 - \frac{\Delta c^{1/2} }{c^{1/2}}$. As $c = 1$ outside $\Omega$, the radiation condition is unaltered, and the scattering problem can now be written 
\begin{equation}
    \begin{cases}
        &\Delta \tilde{\eta} + \tilde{k}^2  \tilde{\eta} = 0, \quad \text{in} \quad  \R^2, \\
        &\tilde{\eta} = \eta_{i} + \tilde{\eta}_{s},\quad \text{in} \quad \R^2,  \ \\
        &\lim_{|X| \to \infty } \sqrt{|X|}\left( \partial_{|X|} - ik_i\right) \tilde{\eta}_{s} = 0, \quad \text{uniformly for } X/|X| \in S^1. 
    \end{cases}
    \label{eq: liouville scattering}
\end{equation}
The fundamental solution satisfying $(\Delta + k_i^2)G_{k_i}(X,Y) = -\delta (|X-Y|)$ and the radiation condition is $ G_{k_i}(X,Y) = \frac{i}{4}H_0^{(1)}(k_i|X-Y|)$ (Section 1.2 in \cite{pike2001scattering}), and by writing 
$$ \Delta \tilde{\eta} + k_i^2  \tilde{\eta} = k_i^2\left(1 - \frac{\tilde{k}^2}{k_i^2}\right) \tilde{\eta}, $$
and using that $(\Delta + k_i^2)\eta_i = 0$, we obtain the so-called Lippmann-Schwinger formulation of \eqref{eq: liouville scattering} (cf. \cite{coltonkress} for a full proof of the equivalence of the PDE and Lippmann-Schwinger formulation): 
\begin{equation}
    \tilde{\eta}(X) = \eta_i(X) - k_i^2 \int_{\R^2} G_{k_i}(X,Y)  q(Y) \tilde{\eta}(Y) \mathrm{d}Y, \quad X \in \R^2.
    \label{eq: lippmann schwinger}
\end{equation}
Since $q(Y) = 0  $ for $Y \notin \Omega$, we define the integral operator 
\begin{equation}
    V u (X) =  k_i^2 \int_{\Omega} G_{k_i}(X,Y)  q(Y) u(Y) \mathrm{d}Y, \quad X \in \R^2. 
    \label{eq: V}
\end{equation}
From \cite{pike2001scattering}, the fundamental solution $G_{k_i}(X,Y)$ has the asymptotic behaviour 
\begin{equation}
    G_{k_i}(X,Y) \sim \begin{cases}
        i\sqrt{\frac{1}{8\pi r}}\exp(ik_i(r - \frac{\pi}{4})\left(1 + \mathcal{O}(1/r)\right), \quad r = |X-Y| \to \infty, \\
        \ -\frac{1}{2\pi}\log(r) + C + \mathcal{O}(r^2\log(r)), \quad r = |X - Y| \to 0. 
    \end{cases}
    \label{eq: hankel exp}
\end{equation}
Hence, $G_{k_i}(X,Y)$ is weakly singular, and it follows that for any compact set $ \mathcal{V} \subset \R^2$ containing $\Omega$,  ${V : C(\mathcal{V}) \to C(\mathcal{V})}$ is compact (Chapter 2, \cite{kress1989linear}).
Writing \eqref{eq: lippmann schwinger} as 
\begin{equation}
    (I + V)\tilde{\eta} = \eta_i, \quad (I + V) : C(\mathcal{V}) \to C(\mathcal{V}),
\end{equation}
we can apply Fredholm theory and conclude that the above equation has a unique solution $\tilde{\eta} \in C(\mathcal{V})$ that depends continuously on $\eta_i$ if and only if the only solution to ${(I + V) \tilde{\eta}} = 0$ is $\tilde{\eta} = 0$ (Chapter 4, \cite{kress1989linear}). This is indeed the case, and follows from the uniqueness of the Helmholtz equation with radiation condition, and proofs of this can be found in \cite{pike2001scattering,coltonkress,hsiao2008boundary,hubert2012vibration}. Last, when $\tilde{\eta}$ is found, the extension of the solution outside $\mathcal{V}$ is given by \eqref{eq: lippmann schwinger}.  
\end{proof}

\subsection{A simplification of the mild-slope equation}
\label{section: simplification}
From the considerations in Section \ref{section: depth dependence}, it is clear that not much is gained towards the goal of reconstructing the topography $d(X)$ from knowing both $c$ and $k$. Still, the two coefficients appear in a very entangled way, and so it is not evident from \eqref{eq: mse} if it is possible to reconstruct only $k$. However, the special form of the MSE allows for a simplification. Expanding \eqref{eq: mse} and dividing by $c$ we have 
\begin{equation*}
    \Delta \eta + \frac{\nabla c}{c} \cdot \nabla \eta + k^2 \eta = 0.
\end{equation*}
If the term $\frac{\nabla c}{c} \cdot \nabla \eta$ is negligible, we can instead work with the simplified equation 
$\Delta \zeta + k^2 \zeta = 0$, that is, equation \eqref{eq: scattering} with $c = 1$. The following proposition says that under the assumptions of the mild-slope equation, the simplification is valid when the maximal wavenumber $k$ is not too large and when the term $|\Delta d|/d$ is not very large, i.e., when the bottom does not have small rapid oscillations near the surface.   

\begin{proposition}
    Assume the depth map $d$ satisfies the mild-slope condition \eqref{eq: mild slope condition}. For an incoming wave $ \eta_i = a \exp(\bm{k}_i \cdot X)$ with $ a k_i \leq \varepsilon \ll 1 $, let $\eta$ be the solution to \eqref{eq: scattering} and let $\zeta$ be the solution to \eqref{eq: scattering} with the coefficient 
    $c = 1$. 
    Then $$ \|\eta - \zeta \|_\infty \leq C \varepsilon \delta  \|k\|_\infty\left(k_i +\frac{(1+4\delta)\|k\|_\infty^2}{k_i} + \frac{\|\Delta d/d  \|_\infty}{k_i}\right),$$
    where $\delta \ll 1$ is the mild-slope coefficient and $C$ is some constant depending $\Omega$. 
    \label{prop: simplification}
\end{proposition}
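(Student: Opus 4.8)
The plan is to work with the difference $w = \eta - \zeta$ and to show that it solves a scattering problem whose \emph{only} source is the term $\frac{\nabla c}{c}\cdot\nabla\eta$ that was dropped to pass from \eqref{eq: mse} to the simplified equation. First I would expand \eqref{eq: mse} as $\Delta\eta + \frac{\nabla c}{c}\cdot\nabla\eta + k^2\eta = 0$ and subtract the equation $\Delta\zeta + k^2\zeta = 0$ defining $\zeta$, giving
\[ \Delta w + k^2 w = -\tfrac{\nabla c}{c}\cdot\nabla\eta \quad\text{in }\R^2. \]
Because $\eta$ and $\zeta$ are driven by the same incoming wave $\eta_i$, the incoming parts cancel and $w = \eta_s - \zeta_s$ inherits the Sommerfeld radiation condition. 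Thus the whole model discrepancy is produced by the single source $\frac{\nabla c}{c}\cdot\nabla\eta$, which is supported in $\Omega$ since $c\equiv 1$ outside.

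Next I would turn this into an integral equation against the background wavenumber $k_i$: writing $(\Delta + k_i^2)w = (k_i^2 - k^2)w - \frac{\nabla c}{c}\cdot\nabla\eta$ and convolving with $G_{k_i}$ gives $(I + V_0)w = S$, where $V_0 u = \int_\Omega G_{k_i}(X,Y)\,(k_i^2 - k^2)(Y)\,u(Y)\,\dd Y$ is precisely the operator governing $\zeta$ (so that $(I+V_0)\zeta = \eta_i$) and
\[ S(X) = \int_\Omega G_{k_i}(X,Y)\,\tfrac{\nabla c}{c}(Y)\cdot\nabla\eta(Y)\,\dd Y. \]
The operator $I + V_0$ is boundedly invertible on $C(\overline\Omega)$ by the same Fredholm and uniqueness argument used in Proposition \ref{prop: wellposed}, so $\|w\|_\infty \le C\|S\|_\infty$ with $C$ depending only on $\Omega$. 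The weak singularity of $G_{k_i}$ makes $\sup_X\int_\Omega|G_{k_i}(X,Y)|\,\dd Y$ finite, whence $\|S\|_\infty \le C\,\|\tfrac{\nabla c}{c}\|_\infty\,\|\nabla\eta\|_\infty$.

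It then remains to estimate the two factors. For the coefficient I would use that $c$ is a smooth function of the depth via \eqref{eq: c} and the dispersion relation, so $\frac{\nabla c}{c} = (\log c)'(d)\,\nabla d$; bounding the dimensionless quantity $|(\log c)'(d)\,d|$ uniformly over the compact range of $d$ and invoking the mild-slope condition \eqref{eq: mild slope condition}, $|\nabla d|\le \delta k d$, yields $\|\tfrac{\nabla c}{c}\|_\infty \le C\delta\|k\|_\infty$. The remaining, and most delicate, ingredient is the a priori gradient bound
\[ \|\nabla\eta\|_\infty \le C\varepsilon\Big(k_i + \tfrac{(1+4\delta)\|k\|_\infty^2}{k_i} + \tfrac{\|\Delta d/d\|_\infty}{k_i}\Big), \]
which, combined with the previous steps, reproduces the asserted bound. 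I would establish it by differentiating the Lippmann–Schwinger representation $\eta = c^{-1/2}\tilde\eta$ of Proposition \ref{prop: wellposed}: the term $\nabla\eta_i$ supplies the $k_i$ contribution (using $\|\eta_i\|_\infty = a \le \varepsilon/k_i$), while differentiating the integral pairs the kernel $\nabla_X G_{k_i}$ with the potential $k_i^2 q = k_i^2 - k^2 + \frac{\Delta c^{1/2}}{c^{1/2}}$, generating the $\|k\|_\infty^2/k_i$ and $\|\Delta d/d\|_\infty/k_i$ terms, the $4\delta$ correction arising from the $|\nabla d|^2/d^2$ pieces of $\frac{\Delta c^{1/2}}{c^{1/2}}$ again controlled by \eqref{eq: mild slope condition}.

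The hard part will be this gradient estimate. The kernel $\nabla_X G_{k_i}$ behaves like $1/|X-Y|$ as $Y\to X$, which is only borderline integrable in two dimensions, so obtaining a finite bound requires care — splitting into near- and far-field, or integrating by parts to move a derivative onto $c$, which is in fact how the second-order quantity $\Delta d/d$ surfaces. Tracking the precise dependence on $k_i$, $\|k\|_\infty$ and $\|\Delta d/d\|_\infty$, rather than hiding it in an opaque constant, is the genuine bookkeeping burden, and it is exactly what fixes the explicit coefficients, including the $(1+4\delta)$ factor.
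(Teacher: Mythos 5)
Your proposal is correct and follows essentially the same route as the paper: the difference $\eta-\zeta$ satisfies a Helmholtz-type equation whose only source is $\frac{\nabla c}{c}\cdot\nabla\eta$, this is inverted through the Lippmann--Schwinger operator $(I+V)$, and the source is bounded by $\|\nabla c/c\|_\infty\|\nabla\eta\|_\infty$ with the mild-slope estimate $\|\nabla c/c\|_\infty\lesssim\delta\|k\|_\infty$ and the gradient bound obtained by differentiating the Lippmann--Schwinger representation against the potential $k_i^2q$, exactly as in the paper's Appendix. Your only inaccuracy is incidental commentary: the kernel $\nabla_X G_{k_i}\sim|X-Y|^{-1}$ is not ``borderline'' in two dimensions but genuinely weakly singular (hence bounded on $C(\mathcal{V})$ by standard theory, as the paper cites), and the $\Delta d/d$ term arises from the potential $\Delta c^{1/2}/c^{1/2}$ inside $q$ (as you correctly state earlier), not from any integration by parts.
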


\begin{proof}

In the Appendix, the following estimates are derived: 
\begin{equation}
    \frac{|\nabla c|}{c} \leq  \frac{5|\nabla d|}{4d} \quad \text{and} \quad k_i^2\|q\|_\infty \leq k_i^2 +(1+4\delta)\|k\|_\infty^2 + \|\Delta d/d  \|_\infty.
    \label{eq: cq estimates }
\end{equation}
We now estimate $\nabla \eta$. From \eqref{eq: lippmann schwinger}, we have
\begin{equation}
    \nabla \eta = \nabla \eta_i - k^2_i \int_{\Omega} \nabla G_{k_i}(X,Y) q(Y)\eta(Y) \mathrm{d}Y.
\end{equation}
By equation \eqref{eq: hankel exp}, we have that $\nabla G_{k_i}(X,Y) \sim 1/|X -Y|$ as $|X -Y| \to 0 $, and so 
$$ \int_{\Omega} \partial_{x_i} G_{k_i}(X,Y) q(Y)\eta(Y) \mathrm{d}Y : C(\mathcal{V}) \to C(\mathcal{V}) $$
is weakly singular on $\R^2$ and hence bounded (Chapter 2, \cite{kress1989linear}). As $\eta$ depends continuously on $\eta_i$  follows that $$\|\nabla \eta\|_\infty \leq C k_i^2 \|\eta_i\|_\infty \|q\|_\infty $$ for some constant $C$ depending on $\Omega$.  
We now set $\varphi = \zeta -\eta$, and find that $\varphi$ satisfies 
$$ \Delta \varphi + k_i^2 \varphi = k_i^2(1 - k^2/k_i^2)\varphi + \frac{\nabla c \cdot \nabla \eta}{c}.$$
Using \eqref{eq: V}, we write

$$ (I + V)\varphi = f, \quad f = \int_\Omega G_{k_i}(X,Y)\left(\frac{\nabla c \cdot \nabla \eta}{c}\right)(Y) \mathrm{d}Y.$$
By \eqref{eq: cq estimates } and the mild-slope condition \eqref{eq: mild slope condition}, we have that $\|\nabla c/c\|_\infty \leq \frac{5}{4}\delta \|k\|_\infty$, and so 
$$\|\varphi\|_\infty \leq C \|f \|_\infty \leq C k_i^2 \|\tilde{\eta}_i\|_\infty\|q\|_\infty \frac{5}{4}\delta \|k\|_\infty. $$
With $\|\eta_i\|_\infty = a$ we get
\begin{align*}
    \|\eta(X) - \zeta(X)\|_\infty \leq C \varepsilon \delta  \|k\|_\infty\left(k_i +\frac{(1+4\delta)\|k\|_\infty^2}{k_i} + \frac{\|\Delta d/d  \|_\infty}{k_i}\right).
\end{align*}
\end{proof}

\begin{remark}
    Ocean waves are relatively long waves, and so their wave numbers are small. In (\cite{holthuijsen2010waves}, Chapter 6), it is shown that for wind-generated ocean waves in the linear regime in deep water, the average wave period is $T \approx 2\pi$, and this also agrees well with experimental findings. Consequently, $\omega = 2\pi/T = 1$, and so the corresponding wavenumber for the incoming wave is 
    $k_i = \frac{\omega^2}{g} \approx 1/10 $, and if, say, $\min (d(X)) = 0.5$, then $\|k\|_\infty \approx 0.2$. It thus seems like a reasonable choice to use the simplified version of the MSE with $c=1$. 
\end{remark}

\subsection{Wave measurements}

The surface wave field $\zeta$ associated with $\eta$ is of course not complex-valued, but given by 
$$\zeta(X,t) = \text{Re}\left\{\eta(X)\exp(i\omega t)\right\} = \text{Re}\left\{\eta(X)\right\}\cos(\omega t) - \text{Im}\left\{\eta(X)\right\}\sin(\omega t).$$
Hence, observing $\zeta(X,t)$ at a single instant does not determine the spatial wave pattern $\eta(X)$. However, observing it at two instants $t_1$ and $t_2$, we have 

\begin{equation*}
    \begin{bmatrix}
        \zeta(X,t_1) \\
        \zeta(X,t_2)
    \end{bmatrix}
    = \begin{bmatrix}
        \cos(\omega t_1) \hspace{1mm} -\sin(\omega t_1) \\
        \cos(\omega t_2) \hspace{1mm} -\sin(\omega t_2)
    \end{bmatrix}\begin{bmatrix}
        \text{Re}\left\{\eta(X)\right\} \\
        \text{Im}\left\{\eta(X)\right\}
    \end{bmatrix}
\end{equation*}
and the matrix is invertible when $\sin(\omega t_1) \neq 0$ and $t_2 \neq t_1 + \frac{\pi n}{\omega}, n \in \mathbb{Z}$.  
If one observes $\zeta(X,t)$ for $t \in [0,T]$ for $T$ large enough, one can also compute $\eta(X)$ by the Fourier transform.
This is beneficial when the total wave field is a superposition of waves with different angular frequencies and then
\begin{equation*}
    \eta(X) = \mathcal{F}\left(\sum_{\omega_n \in \Lambda} \zeta(X,t;\omega_n) \right)(\omega)
\end{equation*}
where $\Lambda$ is a set of angular frequencies and $\zeta(X,t;\omega_n) = \text{Re}\left\{\eta(X;\omega_n)\exp(i\omega_n t)\right\}$ were for each $\omega_n \in \Lambda$, $\eta(X;\omega_n)$ is the solution to \eqref{eq: scattering} with $\omega = \omega_n$ and some incoming wave $\eta_{i,n}$.  

\subsubsection*{Remote sensing of the wave field}
There are several methods to measure the wave amplitude $\eta$. Using only images or video, one can use the pixel intensity and the angle of the incoming sunlight to estimate $\eta$ \cite{almar2021sea}. By using stereo vision techniques, where two or more video cameras record the same scene, it is possible to use feature extraction and triangulation to estimate the surface elevation $\zeta(X,t)$ \cite{holthuijsen1983stereophotography,wanek2006automated,bergamasco2017wass}. In the paper \cite{kabel2019mapping}, a new LiDAR-based method is investigated for real-time observation of the pointwise wave height. We will assume that we have one of these methods at hand and that we can measure $\eta(X)$ on the domain with variable topography. \\

 We define the wave measurements as follows. Let  $\mathcal{D}$ be a domain containing $\Omega = \text{supp}(h)$ and let $\{X_i\}_{i = 1}^N \subset \mathcal{D}$ be a discrete set of points in $\mathcal{D}$. Let $\varepsilon \in L^2(\mathcal{D})\cap L^\infty(\mathcal{D})$ be an unknown function representing the noise and inaccuracies in the measurement of $\eta(X)$. We set
    \begin{align*}\mathcal{M} &= \eta(X)|_\mathcal{D} , \\   
    \mathcal{M}_\varepsilon &=  \eta(X)|_\mathcal{D} + \varepsilon , \\
    \quad \mathcal{M}_\varepsilon^d &= \{ \eta(X_i) + \varepsilon(X_i) \}_{i = 1}^N. 
    \end{align*}
We now have in place what we need to consider the inverse problem.

\section{The inverse problem}
\label{sect: inversion}
Based on Proposition \ref{prop: simplification}, we work with the simplified MSE, i.e., we assume our wave satisfies equation \eqref{eq: scattering} with $c = 1$. The first proposition is a uniqueness result for the determination of the topography in the case of a perfect, continuous measurement $\mathcal{M}$. 

\begin{proposition}
    The measurement $\mathcal{M}$ uniquely determines the topography $d(X)$. In fact, knowing $\eta(X)$ on any open set $\mathcal{O} \subset \R^2$ uniquely determines $d(X)$. 
    \label{prop: uniqueness}
\end{proposition}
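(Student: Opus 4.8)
The plan is to reduce the statement to a pointwise recovery of the coefficient $k^2(X)$ and then invert the dispersion relation. Suppose two admissible depths $d_1,d_2$ produce, for the same incoming wave $\eta_i$, total fields $\eta_1,\eta_2$ that agree on $\mathcal{O}$; I want to conclude $d_1=d_2$. Since $d_j=H_0$ off $\Omega$ and $d\mapsto k$ is a bijection on $I_\alpha$ (property (i) of Section~\ref{section: depth dependence}), it suffices to show that the associated coefficients $k_1^2,k_2^2$ coincide everywhere, and for this it is enough to read off $k^2$ from the field.

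The key local identity is that, where $\eta\neq 0$, the simplified MSE $\Delta\eta+k^2\eta=0$ gives $k^2(X)=-\Delta\eta(X)/\eta(X)$, so $k^2$ is determined directly by the observed field. Two points need care. First, $\eta\not\equiv 0$: otherwise $\eta_s=-\eta_i$, which is impossible because $\eta_s$ is radiating while the plane wave $\eta_i$ is not. Second, the nodal set $\{\eta=0\}$ must not obstruct the recovery; by the unique continuation property for $\Delta+V$ with $V=k^2\in L^\infty(\R^2)$, a nontrivial solution cannot vanish on any nonempty open set, so $\{\eta\neq 0\}$ is dense and, $k^2$ being continuous, its values on $\{\eta\neq 0\}$ determine it on the whole observation region by continuity. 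Applying this on $\mathcal{O}$ yields $k_1^2=k_2^2$ there. When $\mathcal{O}\supseteq\Omega$, as for $\mathcal{M}$ where $\mathcal{D}\supseteq\Omega$, this already pins down $k^2$, hence $d$, on all of $\Omega$, and the first assertion follows at once.

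The remaining work, and the main obstacle, is the stronger assertion for an arbitrary open $\mathcal{O}$ that may miss $\Omega$ entirely: one must first propagate the field from $\mathcal{O}$ into $\Omega$ before the pointwise formula can be used there. Having established $k_1^2=k_2^2$ on $\mathcal{O}$, the difference $w=\eta_1-\eta_2$ solves a homogeneous Helmholtz-type equation on any region where the two coefficients already agree and vanishes on $\mathcal{O}$, so $w$ vanishes on the connected extension of that region; in the exterior $\Omega^c$, where the coefficient is the known constant $k_i^2$ and solutions are real-analytic, this carries the agreement throughout $\Omega^c$ up to $\partial\Omega$. The delicate step is crossing into $\Omega$, where $k_1^2,k_2^2$ are a priori distinct, so $w$ need not satisfy a homogeneous equation and the field cannot simply be continued through an unknown coefficient. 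I would attempt this by a maximality argument on the set where both the fields and the coefficients agree, showing it is open (immediate) and closed (the hard direction, combining the unique continuation estimate with the pointwise recovery of $k^2$ on the region already reached). I expect this to be the crux, and I would flag that the cleanest situation is when the continuation from $\mathcal{O}$ genuinely reaches into the region of varying depth, since purely exterior observations reduce to the Cauchy data of a single scattering experiment, for which uniqueness is far more subtle.
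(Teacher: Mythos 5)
Your proof of the \emph{first} assertion is correct, and it takes a genuinely different route from the paper. The paper argues through the Lippmann--Schwinger formulation: by injectivity of $K$ (Proposition \ref{prop: K}), the equation $k_i^2Ku=\mathcal{M}-\eta_i$ has the unique solution $u=\mathcal{M}q$, whence $q=u/\mathcal{M}$, $k=k_i\sqrt{1-q}$ and $d=d(k)$. Your pointwise identity $k^2=-\Delta\eta/\eta$ reaches the same conclusion with no operator theory at all, and your treatment of the two danger points --- $\eta\not\equiv 0$ because a radiating field cannot equal $-\eta_i$, and density of $\{\eta\neq 0\}$ via unique continuation plus continuity of $k^2$ --- is something the paper's own proof silently needs as well, since its division by $\mathcal{M}$ faces exactly the same nodal set. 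On this part your argument is, if anything, more careful than the paper's.

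For the \emph{second} assertion (arbitrary open $\mathcal{O}\subset\R^2$) your attempt stops short, and this is a genuine gap: your open/closed maximality sketch never closes the ``hard direction.'' Concretely, once $w=\eta_1-\eta_2$ vanishes on the region reached so far, crossing into territory where the coefficients are not yet known to agree is blocked, because there $w$ only satisfies the \emph{inhomogeneous} equation $\Delta w+k_1^2w=(k_2^2-k_1^2)\eta_2$, so unique continuation cannot be applied to $w$; and the fields are only known to agree on one side of the interface, so the pointwise quotient cannot be used either. Judged as a proof of the full proposition, the attempt is therefore incomplete. You should know, however, that the step you could not complete is precisely the step the paper does not really prove either: its proof of the local statement is a one-line invocation of the unique continuation property, which applies to solutions of a \emph{fixed} elliptic equation with known coefficients, whereas comparing $\eta_1$ and $\eta_2$ requires already knowing they solve the same equation, i.e.\ $k_1^2=k_2^2$ --- the very conclusion sought. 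Your closing observation, that a purely exterior $\mathcal{O}$ reduces the question to recovering a compactly supported coefficient from the Cauchy data of a single scattering experiment (a notoriously open problem in general), identifies the true obstruction that the paper's argument passes over. So: your proposal proves the first claim by a valid alternative method, fails to prove the second, and in failing correctly diagnoses why the paper's own proof of that second claim is not rigorous as written.
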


Although the uniqueness is important and interesting in itself, the question of stability is the most important for applications. As any real measurement will suffer from noise and other inaccuracies, it is crucial to know that one is still able to reconstruct useful information about the quantity of interest. However, as we saw in Section \ref{section: depth dependence}, one cannot hope to accurately reconstruct $d(X)$ from the knowledge of a noisy wavenumber $k_\varepsilon(X)$ when $ k_\varepsilon(X) \approx \mu = \omega^2/g$. We therefore propose the truncated depth function $d_\alpha$:
\begin{definition}
    For $\alpha > 0$ and $\mu = \omega^2/g$, define $d_\alpha$ by 
    \begin{equation*}
        d_\alpha(k) = \begin{cases}
            \frac{\tanh^{-1}(\mu/k))}{k}, \quad \text{for } k \geq \mu + \alpha, \\
           \frac{\tanh^{-1}(\mu/(\mu +\alpha))}{\mu + \alpha}, \quad \text{for } k < \mu + \alpha.
        \end{cases}
    \end{equation*}
    \label{def: dalpha}
\end{definition}
Truncating $d(k)$ at $k = \mu + \alpha$, we discard wavenumbers too close to the critical value $\mu$, and aim only to reconstruct  $d(X)$ up to the maximal depth $d_\text{max} = d(\mu + \alpha)$. If $k(d_\text{max}) \geq \mu + \alpha$, then $d_\alpha(k) = d(k)$.

As $d_\alpha$ has Lipschitz constant $M_{\alpha, \mu} \approx \frac{1}{\alpha^2}$, it is clear that if $\alpha$ is large enough, reconstruction of $d_\alpha$ from $k_\varepsilon$ is stable, but the price we pay is that we get no information about $d(X)$ when $k(X) < \mu + \alpha$. One can, of course, set $d_\alpha(k) = H_0$ (or any other value) when $k < \mu + \alpha$. 

\begin{remark}
For a given truncation parameter $\alpha$ and frequency $\omega$, we can estimate the maximal depth that will be reconstructed. By using the approximation\footnote{Numerical investigation shows that the maximal error in $k$ using this approximation, for $d \in [0.2,100]$ and $\omega \in [0.5,1.5]$, is $\epsilon \approx 0.015$, and this occurs for $\omega =1.5$.} $\tanh(x) \approx x/(1 + x^2)^{1/2}$ (See \cite{bagul2021tight}), we find from \eqref{eq: dispersion} that 
$$ k \approx \frac{\sqrt{\mu^2 + \frac{\mu\sqrt{\mu^2d^2 + 4}}{d}}}{\sqrt{2}}, $$
and requiring $k \geq \mu + \alpha$, we find that 
$$ d_{\text{max}}  \approx \frac{\mu}{\sqrt{\alpha}\sqrt{\alpha^3 + 4\alpha^2\mu + 5\alpha\mu^2 + 2\mu^3 }}.$$
Moreover, if $d_{\text{max}}$ is known, then one can choose $\alpha $ such that 
$$\alpha = \frac{\sqrt{\mu^2 + \frac{\mu\sqrt{\mu^2d_{\text{max}}^2 + 4}}{d_{\text{max}}}}}{\sqrt{2}} - \mu.$$
\end{remark}

Another fundamental problem for the inverse problem is that for subsets $\mathcal{N} \subset \mathcal{D}$ such that $ \mathcal{M}_\varepsilon|_{\mathcal{N}} \approx 0$, i.e., areas where the \emph{noisy} measurement is approximately zero, we cannot hope to discern any usefull information about $k(X)$. The solution is to partition the domain into two sets: one where the measurement is of sufficient magnitude, and we can hope to reconstruct the depth, and one where we cannot. However, as the underlying PDE in our problem is linear, any constant multiple of the solution will satisfy the equation. Hence, introducing sets where the measurement is too small needs to be unambiguous with respect to scaling by a constant.  We therefore define the following subset of $\mathcal{D}$:
\begin{definition}
    For $0< \gamma \leq 1$, let $\mathcal{D}_\gamma = \left\{ X \in \mathcal{D} : \frac{|\mathcal{M}_\varepsilon(X)|}{\|\mathcal{M}_\varepsilon \|_\infty} \geq \gamma \right\}$.
    \label{def: D_gamma}
\end{definition}
Hence, for a given $\gamma$, the set $\mathcal{D}_\gamma $ is the set on which we will attempt to reconstruct the topography. 

We are now ready to state our main result. 

\begin{proposition}
    For any $\alpha,$ $0 <\gamma \leq  1$, let $d_\alpha^\varepsilon$ be the reconstruction of $d_\alpha$ from a noisy measurement $\mathcal{M}_\varepsilon$. 
    Then it holds that 
    $$\|d_\alpha^\varepsilon - d_\alpha \|_{L^2(\mathcal{D}_\gamma)} \leq C \gamma^{-1/2}\left(\frac{3k_i}{\alpha(\alpha + \mu)}\right)\sqrt{ \|\mathcal{M}_\varepsilon - \mathcal{M} \|_{L^2(\mathcal{D})}},$$
    where the constant $C$ satisfies     $$C =  \left(|\mathcal{D}|^{1/2}\left( \|q\|_\infty  + \tilde{C} \sqrt{\|q\eta\|_{H^2(\mathcal{D})}/k_i^2} \right)\right)^{1/2},
    $$
    and $|\mathcal{D}|$ denotes the area of the measurement domain. 
    \label{prop: stability}
\end{proposition}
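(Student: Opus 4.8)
The plan is to write the reconstruction as the composition $\eta \mapsto k^2 \mapsto k \mapsto d_\alpha$ and to control each arrow separately, quarantining the single genuinely ill-posed step. Working with the simplified MSE \eqref{eq: scattering} with $c=1$, the total field obeys $\Delta\eta + k^2\eta = 0$, so that $k^2(X) = -\Delta\eta(X)/\eta(X)$ at every point where $\eta\neq 0$; this is the map that turns wave data into the local wavenumber, after which $d_\alpha$ is obtained by inverting the dispersion relation \eqref{eq: dispersion} as in Definition \ref{def: dalpha}. The set $\mathcal{D}_\gamma$ of Definition \ref{def: D_gamma} is exactly where this division is legitimate: there $|\mathcal{M}_\varepsilon|\ge\gamma$, which (for the small noise implicit in the statement) keeps both $|\eta|$ and $|\eta_\varepsilon|$ bounded below by a fixed multiple of $\gamma$. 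The conditional hypothesis I would make explicit is an a priori bound $\|q\eta\|_{H^2(\mathcal{D})}\le E$ on the contrast source; by elliptic regularity and $h\in C^2_0(\R^2)$ this is natural, and it is the quantity that surfaces in the constant.

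Next I would assemble the elementary arrows. By \eqref{eq: lipschitz}, $k\mapsto d_\alpha(k)$ is globally Lipschitz with constant $M_{\alpha,\mu}\le 3/((\mu+\alpha)\alpha)$, so $\|d_\alpha^\varepsilon - d_\alpha\|_{L^2(\mathcal{D}_\gamma)}\le M_{\alpha,\mu}\|k_\varepsilon - k\|_{L^2(\mathcal{D}_\gamma)}$; since the truncation forces $k,k_\varepsilon\ge\mu+\alpha$, passing from $k$ to $k^2$ costs only a factor $1/(k_\varepsilon+k)\le 1/(2(\mu+\alpha))$, and the normalization $q = 1 - k^2/k_i^2$ (so $k^2 = k_i^2(1-q)$) introduces the remaining power of $k_i$ appearing in the factor $3k_i/(\alpha(\alpha+\mu))$. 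It therefore suffices to estimate $\|k_\varepsilon^2 - k^2\|_{L^2(\mathcal{D}_\gamma)}$. Writing $\eta_\varepsilon = \eta + \varepsilon$ and using $(\Delta+k^2)\eta = 0$, the reconstruction identity becomes $(k_\varepsilon^2 - k^2)\,\eta_\varepsilon = -(\Delta + k^2)\varepsilon$, and I would split the right-hand side as $(\Delta+k^2)\varepsilon = (\Delta+k_i^2)\varepsilon - k_i^2 q\varepsilon$. The zeroth-order piece $k_i^2 q\varepsilon$ is harmless: it contributes the term carrying $\|q\|_\infty$ and, after Cauchy--Schwarz on $\mathcal{D}$, the factor $|\mathcal{D}|^{1/2}$. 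The dangerous piece hidden in $(\Delta+k_i^2)\varepsilon$ is $\Delta\varepsilon$, i.e. two derivatives of the noise.

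The crux is exactly this second-order piece, and it is where the conditional hypothesis and the Hölder exponent $1/2$ enter, through a standard conditional-stability interpolation. The recovered coefficient error $w := k_\varepsilon^2 - k^2$ (equivalently $q_\varepsilon - q$) satisfies two complementary bounds on $\mathcal{D}_\gamma$. On one hand, the noise reaches the data only through the \emph{smoothing} volume potential of Proposition \ref{prop: wellposed}: since $\eta - \eta_i = -V[q\eta]$ with $V$ given by \eqref{eq: V} and $G_{k_i}$ gaining two orders, the data discrepancy controls the contrast-source error, hence $w$, only in a weak norm, of size $\lesssim\|\mathcal{M}_\varepsilon - \mathcal{M}\|_{L^2}$. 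On the other hand, the a priori hypothesis $\|q\eta\|_{H^2}\le E$, together with the lower bound $|\eta_\varepsilon|\ge\gamma$ used to pass from $q\eta$ to $q$ (hence to $k^2$), bounds $w$ in a strong $H^2$ norm by $\lesssim E/\gamma$. Interpolating, with $L^2$ an exact halfway space between the weak and the strong scale, gives
\[
\|w\|_{L^2(\mathcal{D}_\gamma)}\ \lesssim\ \|\mathcal{M}_\varepsilon - \mathcal{M}\|_{L^2}^{1/2}\,(E/\gamma)^{1/2},
\]
which simultaneously produces the square-root rate $\sqrt{\|\mathcal{M}_\varepsilon - \mathcal{M}\|_{L^2}}$, the fractional power of $\|q\eta\|_{H^2}$ inside the constant, and---crucially---the exponent that \emph{halves} the amplitude factor $\gamma^{-1}$ into the $\gamma^{-1/2}$ of the claimed bound. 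Collecting this with the zeroth-order $\|q\|_\infty$-term and the Lipschitz factor $3k_i/(\alpha(\alpha+\mu))$ assembles the estimate.

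The main obstacle is making this interpolation rigorous and quantitative. I would need to verify that the volume potential $V$ of \eqref{eq: V} is invertible as an order-$(-2)$ operator on the relevant Sobolev scale, so that the $L^2$ data error genuinely dominates the weak ($H^{-2}$) norm of the contrast-source error, and to pin down the interpolation constant $\tilde C$ (and with it the dependence on $\mathcal{D}$ and $k_i$). The most delicate point is conceptual rather than computational: since $\varepsilon$ is only assumed in $L^2\cap L^\infty$, the naive pointwise reconstruction $-\Delta\eta_\varepsilon/\eta_\varepsilon$ is not even defined, so one must read $d_\alpha^\varepsilon$ as the output of a reconstruction \emph{constrained} to the admissible ball $\{\|q\eta\|_{H^2}\le E\}$; only for such a regularized reconstruction does the strong-norm bound, and hence the whole estimate, make sense. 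Once these are in place, tracking the exact powers of $\gamma$, $k_i$, $\mu+\alpha$ and $\alpha$ through the chain to land on the displayed constant is routine but bookkeeping-heavy.
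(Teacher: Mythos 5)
Your high-level architecture (recover the contrast source from the Lippmann--Schwinger relation, divide on $\mathcal{D}_\gamma$, invert the dispersion relation using the Lipschitz bound \eqref{eq: lipschitz}) matches the paper's, but the mechanism you propose for the H\"older exponent does not close, and it cannot produce the stated constant. In the paper, the square root comes from the elementary pointwise inequality $(k_\varepsilon - k)^2 \leq |k_\varepsilon^2 - k^2| = k_i^2\,|q_\varepsilon - q|$ (valid since $k, k_\varepsilon \geq 0$): one first proves a \emph{Lipschitz} bound in the \emph{weak} $L^1(\mathcal{D}_\gamma)$ norm, $\|q_\varepsilon - q\|_{L^1(\mathcal{D}_\gamma)} \leq C\|\mathcal{M}_\varepsilon - \mathcal{M}\|_{L^2(\mathcal{D})}$ (Proposition \ref{prop: q_stability}, which rests on the Tikhonov error bound of Proposition \ref{prop: tikhonov}, itself proven via the source condition $u = \eta q = K^*w$ with $w = -(\Delta + k_i^2)u \in L^2$), and then integrates $(k_\varepsilon - k)^2 \leq k_i^2|q_\varepsilon - q|$ over $\mathcal{D}_\gamma$ and takes the square root at the very end. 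That final square root halves \emph{every} exponent simultaneously: the data error becomes $\delta^{1/2}$, the amplitude factor $\gamma^{-1}$ becomes $\gamma^{-1/2}$, and the bracket enters as its square root. By replacing this step with the smooth Lipschitz bound $|k_\varepsilon - k| \leq |k_\varepsilon^2 - k^2|/(2(\mu+\alpha))$, you commit yourself to controlling $\|q_\varepsilon - q\|$ in $L^2$, and your route to that---interpolation $[H^{-2},H^2]_{1/2} = L^2$ applied to $w = q_\varepsilon - q$---is where the argument breaks.

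Concretely, the strong-norm half of your interpolation is false: you claim $\|w\|_{H^2} \lesssim E/\gamma$ by ``passing from $q\eta$ to $q$'' using $|\mathcal{M}_\varepsilon| \geq \gamma$, but division by $\mathcal{M}_\varepsilon = \eta + \varepsilon$, with $\varepsilon$ only in $L^2 \cap L^\infty$, destroys Sobolev regularity: $q_\varepsilon = u_\varepsilon/\mathcal{M}_\varepsilon$ is generically not even in $H^1$, no matter how you constrain $u_\varepsilon$, so there is no strong bound to interpolate against (even with exact data, $\|u/\eta\|_{H^2}$ involves derivatives of $1/\eta$ and is not of size $E/\gamma$). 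The weak-norm half has the same defect: multiplication by the rough function $1/\mathcal{M}_\varepsilon$ is not bounded on $H^{-2}$, so even granting the bounded-below property of $K$ from $H^{-2}$ to $L^2$ (which you flag but leave open, and which needs a compactness--uniqueness argument, not just ellipticity of the order $-2$ symbol), the data error controls $u_\varepsilon - u$ in $H^{-2}$, not $w$; moreover, Sobolev interpolation on the merely measurable set $\mathcal{D}_\gamma$ is itself dubious, which the paper's pointwise-then-integrate argument avoids entirely. If you repair the scheme in the only available way---interpolate the contrast-source error to get $\|u_\varepsilon - u\|_{L^2} \lesssim \sqrt{E\,\delta}$, and only afterwards divide by $|\mathcal{M}_\varepsilon| \geq \gamma$ in $L^2(\mathcal{D}_\gamma)$---you do obtain a H\"older-$1/2$ estimate, but with constant proportional to $\gamma^{-1}$ rather than the $\gamma^{-1/2}$ asserted in the proposition. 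The fix is to drop the interpolation and use the paper's two ingredients: the source-condition/Tikhonov bound for the $L^2$ error of the contrast source, and the inequality $(k_\varepsilon - k)^2 \leq k_i^2|q_\varepsilon - q|$, with the square root deferred to the last line.
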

\begin{remark} 
\label{remark: stability}
The above result expresses the main instability mechanism of the inverse problem. If we want to either reconstruct the topography in a potentially deep area or reconstruct the topography from waves with a high wavenumber, we need to choose $\alpha $ very small, as then $\tanh(kd) \approx 1$ and $k \approx \mu$. But choosing $\alpha$ small results in a large constant $C$ and increasing uncertainty in the reconstructed topography. Similarly (but less fatal in practice), for the choice of a lower bound $\gamma$, i.e., the threshold for which we say that we that a noisy measurement is too small and yields no useful information, we see that when $\gamma \to 0$ the constant blows up. On the other hand, we can choose $\alpha$ or $\gamma$ (or both) large; this reduces the constant and hence increases stability but limits the depth we can reconstruct or the region $\mathcal{D}_\gamma$ where we can reconstruct the topography. 
In any case, we can conclude that longer waves should result in a more stable reconstruction of the topography but that if the water is too deep, reconstruction will be unstable.    
\end{remark}

To prove the above propositions, we consider the inversion in two steps: 
\begin{itemize}
    \item[1:] Given a measurement $\mathcal{M}$ or $\mathcal{M}_\varepsilon$, reconstruct $q(X) = 1 - k(X)^2/k_i^2$. 
    \item[2:] Reconstruct the truncated depth function $d_\alpha(X)$ from $q(X)$.
\end{itemize}

For the reconstruction of $q$ we proceed as follows: rearranging \eqref{eq: lippmann schwinger}, we have that 
\begin{equation*}
    \mathcal{M}(X) - \eta_i(X) = k_i^2\int_\mathcal{D} G_{k_i}(X,Y) \mathcal{M}(Y) q(Y) \mathrm{d}Y, \quad X \in \mathcal{D}.
\end{equation*}
Writing 
\begin{equation}
    b_\varepsilon = \mathcal{M}_\varepsilon -  \eta_i  \quad \text{and} \quad  Ku = \int_\mathcal{D} G_{k_i}(X,Y) u(Y) \mathrm{d}Y, 
    \label{eq: bandK}
\end{equation}
we consider the problem 
\begin{equation}
    \text{Find } u \text{ such that} \quad k_i^2Ku = b_\varepsilon.
    \label{eq: fredholm}
\end{equation}
As $K$ is a compact operator, solving this equation for $u$ is a Fredholm equation of the first kind and the classical example of an ill-posed problem. If we can solve it, we get in the case of exact measurement $\mathcal{M}$ that $ q = \frac{u}{\mathcal{M}}$, and consequently $k(X) = k_i\sqrt{1- q(X)}$. For noisy measurements, however, both the inversion of $K$ and division by $\mathcal{M}_\varepsilon$ require further investigation. The following properties show that the problem of finding $u$ is only mildly ill-posed and that one can obtain an optimal stability estimate by using Tikhonov regularization. 

\begin{proposition}
    The operator $K : L^2(\mathcal{D}) \to L^2(\mathcal{D})$ is compact and injective, and there are constants $0 < c \leq C$ such that the singular values of $K$ satisfy   
    $$c n^{-1} \leq  s_n(K) \leq C n^{-1} \quad \text{for } n = 1,2,3,... \hspace{1mm}.$$ 
    \label{prop: K}
\end{proposition}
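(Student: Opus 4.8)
The plan is to treat the three assertions—compactness, injectivity, and the two-sided singular-value bounds—separately, reserving the lower bound $s_n(K)\ge cn^{-1}$ as the genuinely delicate point. First I would dispose of compactness: the small-argument expansion in \eqref{eq: hankel exp} gives $|G_{k_i}(X,Y)|\lesssim 1+|\log|X-Y||$, which is square-integrable over $\mathcal{D}\times\mathcal{D}$ since $r|\log r|^2$ is integrable near $r=0$ in two dimensions. Hence $K$ is Hilbert--Schmidt and in particular compact. For injectivity, I would suppose $Ku=0$ on $\mathcal{D}$, extend $u$ by zero to $\R^2$, and set $v:=Ku$, the Helmholtz volume potential of $u$; thus $(\Delta+k_i^2)v=-u$ on $\R^2$ and $v$ satisfies the Sommerfeld radiation condition. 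Elliptic regularity gives $v\in H^2_{\mathrm{loc}}(\R^2)$, so $v$ and $\nabla v$ have matching traces across $\partial\mathcal{D}$. Since $v\equiv 0$ on the open set $\mathcal{D}$, both Cauchy data $v$ and $\partial_\nu v$ vanish on $\partial\mathcal{D}$, and $v$ solves the homogeneous Helmholtz equation in the exterior; by uniqueness for the exterior problem (Holmgren/unique continuation together with the radiation condition, as in \cite{coltonkress}) we get $v\equiv 0$ on $\R^2$, whence $u=-(\Delta+k_i^2)v=0$.

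For the upper bound I would exploit that $K$ smooths of order two. Writing $v=Ku$ with $(\Delta+k_i^2)v=-u$, interior elliptic estimates combined with the $L^2$-boundedness of $K$ give $\|Ku\|_{H^2(\mathcal{D})}\le C\|u\|_{L^2(\mathcal{D})}$, so $K$ factors as $\iota\circ\tilde K$ with $\tilde K:L^2(\mathcal{D})\to H^2(\mathcal{D})$ bounded and $\iota:H^2(\mathcal{D})\hookrightarrow L^2(\mathcal{D})$ the compact Sobolev embedding. The approximation numbers of $\iota$ on a bounded planar domain satisfy $a_n(\iota)\asymp n^{-2/2}=n^{-1}$ (Weyl asymptotics/Birman--Solomjak), so multiplicativity $s_n(K)\le\|\tilde K\|\,a_n(\iota)$ yields $s_n(K)\le Cn^{-1}$.

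The lower bound is the crux. Using the classical identity $Y_0(z)=\tfrac{2}{\pi}\log(z/2)J_0(z)+(\text{entire})$, I would split the kernel as $G_{k_i}(X,Y)=-\tfrac{1}{2\pi}\log|X-Y|+g(X,Y)$, where the self-adjoint operator $K_{\log}$ with kernel $-\tfrac{1}{2\pi}\log|X-Y|$ carries the full singularity and the remainder $g$ consists of a constant (a rank-one operator), a smooth imaginary part, and terms of type $r^2\log r$; each contributes an operator $R$ with $s_n(R)=o(n^{-1})$. Next I would compare $K_{\log}$ with the inverse Dirichlet Laplacian $(-\Delta_{\mathcal{D}})^{-1}$: their Schwartz kernels differ only by the harmonic boundary corrector $h_X(Y)$ of the Dirichlet Green's function, a smoother kernel, so the eigenvalues of $(-\Delta_{\mathcal{D}})^{-1}$ and the singular values of $K_{\log}$ share the same leading asymptotics. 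Weyl's law $\lambda_n(-\Delta_{\mathcal{D}})\sim\tfrac{4\pi}{|\mathcal{D}|}n$ then gives $|\lambda_n(K_{\log})|\sim\tfrac{|\mathcal{D}|}{4\pi}n^{-1}$, and the singular-value inequality $s_{2n-1}(K_{\log})\le s_n(K)+s_n(R)$ together with $s_n(R)=o(n^{-1})$ transfers this to $s_n(K)\ge cn^{-1}$.

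I expect the main obstacle to be precisely the bookkeeping in the last paragraph: verifying that every correction term—the smooth imaginary part, the constant rank-one piece, the $r^2\log r$ remainder, and the harmonic-extension kernel $h_X(Y)$—is strictly lower order, i.e. $s_n=o(n^{-1})$, so that the logarithmic principal part alone dictates the asymptotics. Weyl's law for the Dirichlet Laplacian is the one substantial external input; everything else reduces to singular-value perturbation inequalities and elliptic regularity. Note this argument in fact yields the sharp two-sided asymptotic $s_n(K)\sim\tfrac{|\mathcal{D}|}{4\pi}n^{-1}$, which is stronger than the stated claim.
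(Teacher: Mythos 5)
Your proposal is correct in substance but follows a genuinely different route from the paper. The paper's proof is pseudodifferential: it identifies the volume potential $V$ as a classical PDO of order $-2$ (via the pseudohomogeneous-kernel theorem in \cite{hsiao2008boundary}), notes that its principal symbol $\sim (k_i^2-|\xi|^2)^{-1}$ is non-vanishing, applies the two-sided singular-value estimate of \cite{koch2020instability} to a cut-off operator $K_\delta = \varphi_\delta V$, and transfers the bounds to $K$ through the perturbation inequality $|s_n(K)-s_n(K_\delta)|\le \|K-K_\delta\|$, the norm being made small by shrinking the cut-off. You replace this machinery with classical ingredients: Hilbert--Schmidt compactness; the factorization $K=\iota\circ\tilde K$ through $H^2(\mathcal{D})$ plus Birman--Solomjak bounds for the embedding (upper bound); and the splitting of $G_{k_i}$ into the logarithmic potential plus remainders, compared against the Dirichlet Green's function and Weyl's law (lower bound). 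Your route is self-contained modulo Weyl's law, avoids symbol calculus, and yields the sharp constant $s_n(K)\sim \frac{|\mathcal{D}|}{4\pi}n^{-1}$, which is stronger than the claim; the paper's route is shorter and generalizes immediately to other orders and dimensions. (Two small remarks: your injectivity argument is more roundabout than necessary, since once $v:=Ku$ vanishes on the open set $\mathcal{D}$ one gets $u=-(\Delta+k_i^2)v=0$ there directly, without the exterior problem; and the asymptotic statements give the two-sided bound only for large $n$, but injectivity guarantees $s_n>0$ for every $n$, so the constants can be adjusted to cover all $n$.)

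There is one step that genuinely needs repair: your dismissal of the harmonic boundary corrector $h_X(Y)$ as ``a smoother kernel.'' It is not smooth on $\overline{\mathcal{D}}\times\overline{\mathcal{D}}$: its boundary data is $\frac{1}{2\pi}\log|X-\cdot|$ on $\partial\mathcal{D}$, so as $X$ and $Y$ approach the same boundary point the kernel blows up logarithmically and its derivatives degenerate. Interior smoothness plus a square-integrable singularity only gives the Hilbert--Schmidt rate $s_n = o(n^{-1/2})$, which is not $o(n^{-1})$, so the claim as justified would not close the argument. The correct reason the corrector is lower order is structural rather than kernel regularity: with $H:=K_{\log}-(-\Delta_{\mathcal{D}})^{-1}$ one has $Hu=\mathcal{P}\left(\tau(K_{\log}u)\right)$, where $\tau$ is the trace on $\partial\mathcal{D}$ and $\mathcal{P}$ the harmonic extension. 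Thus $H$ factors as $\mathcal{P}\circ\iota_\partial\circ(\tau K_{\log})$, where $\tau K_{\log}:L^2(\mathcal{D})\to H^{3/2}(\partial\mathcal{D})$ and $\mathcal{P}:L^2(\partial\mathcal{D})\to L^2(\mathcal{D})$ are bounded, and $\iota_\partial : H^{3/2}(\partial\mathcal{D})\hookrightarrow L^2(\partial\mathcal{D})$ is a Sobolev embedding on a \emph{one-dimensional} manifold, whose approximation numbers decay like $n^{-3/2}$. Hence $s_n(H)\lesssim n^{-3/2}=o(n^{-1})$ and your comparison with the Dirichlet Laplacian goes through. The $r^2\log r$ remainders deserve the same quantitative care (they are kernels of class $C^{1,\alpha}$ for every $\alpha<1$, equivalently PDOs of order $-4$, hence also $s_n\lesssim n^{-3/2}$); the entire terms and the imaginary part are genuinely smooth on the compact closure and contribute rapidly decaying singular values. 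With these substitutions your proof is complete.
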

The degree of ill-posedness can be quantified by the decay of the singular values of the operator, and the above result shows that $ s_n(K) \sim n^{-1}$. Hence, the decay is not too severe, and the inverse problem of finding $u$ from $Ku = b$ is then said to be \emph{mildly ill-posed} (Chapter 3, \cite{mueller2012linear}).

To find an approximate solution to \eqref{eq: fredholm} from noisy data, we consider the classical Tikhonov regularization. Let $\lambda > 0$ be the regularization parameter, and let $u_\delta^\lambda$ be the minimizer of the Tikhonov functional, i.e.,  
\begin{equation}
    u_\delta^\lambda = \text{argmin}_{u \in L^2(\mathcal{D})} \| b_\varepsilon - k_i^2K u \|_{L^2(\mathcal{D})}^2 + \lambda \|u\|_{L^2(\mathcal{D})}^2,
    \label{eq: tikhonov}
\end{equation}
where $b_\varepsilon = \mathcal{M}_\varepsilon - \eta_i$ (See e.g. \cite{hanke2017taste}). 
We then have the following result:
\begin{proposition}
    Let $\delta > 0$ and assume $\|\mathcal{M}_\varepsilon - \mathcal{M} \|_{L^2(\mathcal{D})} \leq \delta$. Then there exists $\lambda > 0$ such that $$ \|u_\delta^\lambda - u\|_{L^2(\mathcal{D})} \leq C \sqrt{\|u\|_{H^2(\mathcal{D})}} \frac{\sqrt{\delta}}{k_i}$$ for some constant $C > 0$, where $u_\delta^\lambda$ is the solution to \eqref{eq: tikhonov}.  
    \label{prop: tikhonov}
\end{proposition}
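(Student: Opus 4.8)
The plan is to treat \eqref{eq: fredholm} as a classical linear ill-posed problem and invoke the standard convergence-rate theory for Tikhonov regularization under a source condition of order $\nu = \tfrac12$, which is precisely what produces the $\sqrt{\delta}$ rate. Write $A = k_i^2 K$, so that the exact equation reads $Au = b$ with $b = \mathcal{M} - \eta_i$, while the data satisfy $\|b_\varepsilon - b\|_{L^2(\mathcal{D})} = \|\mathcal{M}_\varepsilon - \mathcal{M}\|_{L^2(\mathcal{D})} \le \delta$. The minimizer of \eqref{eq: tikhonov} has the closed form $u_\delta^\lambda = (A^*A + \lambda I)^{-1}A^* b_\varepsilon$, and I would split the error against the regularized \emph{exact} solution $u_0^\lambda = (A^*A + \lambda I)^{-1}A^* b$:
\begin{equation*}
\|u_\delta^\lambda - u\|_{L^2(\mathcal{D})} \le \|u_\delta^\lambda - u_0^\lambda\|_{L^2(\mathcal{D})} + \|u_0^\lambda - u\|_{L^2(\mathcal{D})},
\end{equation*}
the first term being the propagated data error and the second the regularization (approximation) error.

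For the two terms I would use the textbook spectral estimates obtained from the singular system of $A$, which exists by Proposition \ref{prop: K}. The filter bound $\|(A^*A + \lambda I)^{-1}A^*\| \le \tfrac{1}{2\sqrt{\lambda}}$ gives $\|u_\delta^\lambda - u_0^\lambda\| \le \delta/(2\sqrt{\lambda})$. For the approximation term I would impose the source condition $u = A^*\omega$ for some $\omega \in L^2(\mathcal{D})$; then $u_0^\lambda - u = -\lambda (A^*A + \lambda I)^{-1}u = -\lambda(A^*A+\lambda I)^{-1}A^*\omega$, and the bound $\|\lambda(A^*A+\lambda I)^{-1}A^*\| \le \tfrac12\sqrt{\lambda}$ yields $\|u_0^\lambda - u\| \le \tfrac12\sqrt{\lambda}\,\|\omega\|$. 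Adding the two and minimizing $\tfrac{\delta}{2\sqrt{\lambda}} + \tfrac12\sqrt{\lambda}\,\|\omega\|$ over $\lambda$ gives the optimal a priori choice $\lambda = \delta/\|\omega\|$ and the clean bound $\|u_\delta^\lambda - u\| \le \sqrt{\delta\,\|\omega\|}$.

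Everything then hinges on verifying the source condition $u \in \operatorname{Range}(A^*)$ together with the quantitative estimate $\|\omega\| \lesssim k_i^{-2}\|u\|_{H^2(\mathcal{D})}$; this is the step I expect to be the real work, the rest being routine. The kernel of $A^* = k_i^2 K^*$ is $k_i^2\overline{G_{k_i}}$, and since $k_i$ is real $\overline{G_{k_i}}$ is again a fundamental solution of $\Delta + k_i^2$ (now satisfying the incoming radiation condition), so that $(\Delta + k_i^2)(A^*\omega) = -k_i^2\omega$ in the interior for any density $\omega$. I would therefore set $\omega = -k_i^{-2}(\Delta + k_i^2)u$. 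Because the true $u = q\eta$ vanishes outside $\Omega \Subset \mathcal{D}$ (as $q$ is supported in $\Omega$) and lies in $H^2$, the density $\omega$ belongs to $L^2(\mathcal{D})$ with $\|\omega\|_{L^2} \le k_i^{-2}\|(\Delta + k_i^2)u\|_{L^2} \le k_i^{-2}(1 + k_i^2)\|u\|_{H^2(\mathcal{D})}$. To confirm that $A^*\omega = u$ I would observe that $\psi := A^*\omega - u$ satisfies $(\Delta + k_i^2)\psi = -k_i^2\omega - (\Delta + k_i^2)u = 0$ on all of $\R^2$, while $A^*\omega$ obeys the incoming radiation condition and $u$ has compact support; Sommerfeld/Rellich uniqueness then forces $\psi \equiv 0$. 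Feeding $\|\omega\| \lesssim k_i^{-2}\|u\|_{H^2(\mathcal{D})}$ into $\sqrt{\delta\,\|\omega\|}$ gives $\|u_\delta^\lambda - u\|_{L^2(\mathcal{D})} \le C\,k_i^{-1}\sqrt{\|u\|_{H^2(\mathcal{D})}}\,\sqrt{\delta}$ for bounded $k_i$, which is the claim.

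The main obstacle, as flagged, is the range/source-condition step: one must genuinely place $u$ in $\operatorname{Range}(A^*)$ — not merely in a Sobolev space of matching smoothness — and track the $k_i$-dependence of the source norm. The compact support of $u = q\eta$ strictly inside $\mathcal{D}$ is exactly what makes the explicit construction $\omega = -k_i^{-2}(\Delta + k_i^2)u$ admissible and lets the uniqueness argument close; without it, boundary contributions from the volume potential would obstruct the identity $A^*\omega = u$. A secondary point is that the optimal $\lambda = \delta/\|\omega\|$ depends on the unknown $\|\omega\|$ (equivalently on $\|u\|_{H^2}$), so the statement is an a priori rate asserting only the \emph{existence} of a good $\lambda$; in practice one would replace it by the discrepancy principle, which attains the same order.
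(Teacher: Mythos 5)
Your proof is correct and follows essentially the same route as the paper: both reduce the claim to the standard $O(\sqrt{\delta})$ Tikhonov rate under the source condition $u \in R(K^*)$, and both verify that condition by the same explicit construction --- identifying the adjoint kernel $\overline{G}_{k_i}$ as the incoming fundamental solution of $\Delta + k_i^2$, taking the source element proportional to $-(\Delta + k_i^2)u$ (admissible because $u = q\eta$ is supported strictly inside $\mathcal{D}$), and closing with radiation-condition uniqueness; the only cosmetic difference is that you re-derive the rate from spectral filter bounds and an optimal choice $\lambda = \delta/\|\omega\|$, where the paper cites Theorem 8.4 of Hanke. The one step you assume rather than prove is the regularity $u = q\eta \in H^2$ with compact support, which the paper establishes from the Lippmann--Schwinger representation (showing $\partial^2_{x_i,x_j}\eta \in C(\R^2)$, hence $u \in C^2_0(\Omega)$); this is harmless for the literal statement, since the bound is vacuous when $\|u\|_{H^2(\mathcal{D})} = \infty$, but it is the part of the paper's argument that makes the estimate non-trivial.
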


The above optimal accuracy holds as $u$ in \eqref{eq: fredholm} is shown to satisfy a so-called source condition. It shows that the error in the regularized solution $u_\delta^\lambda$ is bounded by the measurement error and is the main result leading to Proposition \ref{prop: stability}. As the proofs of these propositions are a bit lengthy, we have moved them to the Appendix. 

\section{Reconstruction method and numerical experiments}
\label{sect: numerical}

In this section, we investigate the inverse problem numerically. We consider a discrete, noisy measurement $\mathcal{M}^d_\varepsilon$, and test the proposed inversion method for two different topographies and two different incoming waves. We first describe a simple, regularized inversion procedure.

\subsection{A simple reconstruction method}
\label{Section: discrete inversion}
The formulation used to prove the stability properties in Section \ref{sect: inversion} can, in principle, be used to solve the inverse problem. However, the fact that we need to subtract the incoming wave from the measurement and invert a possibly very large matrix that depends on the possibly uncertain wavenumber $k_i$ of the incoming wave makes it less than ideal. We instead propose a very simple method based on the following observation: As the wave field $\eta$ is assumed to satisfy the simplified MSE, we have that 
\begin{equation}
-\Delta \eta = k^2\eta \quad \text{in} \quad  \mathcal{D}. 
\label{eq: poisson}
\end{equation} 
For $\varepsilon > 0$, let the set $\Omega_\varepsilon$ be such that 
 $\Omega \subset \Omega_\varepsilon \subset \mathcal{D}$ and  $$\min(\text{dist}(\partial \Omega_\varepsilon,\Omega),\text{dist}(\partial \Omega_\varepsilon,\mathcal{D})) > \varepsilon.$$ Let $\varphi \in C^\infty_0(\R^2)$ be a smooth cut-off function  such that $\varphi|_{\Omega} = 1$ and $\varphi|_{\R^2\setminus \Omega_\varepsilon} = 0$. Then 
 $$ -\Delta (\varphi \eta) = u $$
where $u = \varphi k^2 \eta -\nabla \varphi \cdot \nabla \eta - \eta \Delta \varphi$ and so $u|_\Omega = \eta k^2$. With $\eta$ replaced by $\mathcal{M}_\varepsilon$, we need to apply the Laplacian to the noisy measurement $\mathcal{M}_\varepsilon$ to estimate $u = \eta k^2$. Differentiating noisy data is, of course, an ill-posed problem, and so we need a regularization method. Recalling that true wave $\eta$ should have a wavenumber $k \approx k_i$, and that, by assumption, $k_i$ is small, a natural idea is to apply a low-pass filter to $\mathcal{M}_\varepsilon$ before we apply $-\Delta$. More precisely, let $\mathcal{G}_\sigma$ be a Gaussian low-pass filter given by 
$$ \mathcal{G}_\sigma \mathcal{M}_\varepsilon(X) = \int_\mathcal{D} g_\sigma(X,Y)  \varphi(Y)\mathcal{M}_\varepsilon(Y) \mathrm{d}Y, \quad g_\sigma(X,Y) = \frac{1}{2\pi\sigma^2}\exp\left(-\dfrac{|X-Y|^2}{2\sigma^2}\right).$$
We now have that 
$$ \Delta (\mathcal{G}_\sigma \mathcal{M}_\varepsilon)(X) = \int_\mathcal{D}  \Delta_X g_\sigma(X,Y)  \varphi(Y)\mathcal{M}_\varepsilon(Y) \mathrm{d}Y. $$

Assuming now that the measurement domain is rectangular, $\mathcal{D} = (0,L)\times (0,L)$, we denote by $\{v_n\}_{n\in \mathbb{Z}^2}$ the usual Fourier basis, i.e., ${v_n = L^{-1}\exp(2\pi i(n_1x_1 + n_2x_2)/L)}$, and let $$\mathcal{F}(g)[n] = \int_\mathcal{D}\overline{v}_n(Y)g(Y) \mathrm{d}Y, \quad n \in \mathbb{Z}^2.$$
If we assume that $3\sigma \leq L/2 $ (i.e., the Gaussian is essentially supported in $\mathcal{D}$), we have  
$$  \mathcal{F}((\Delta g_\sigma)((L/2,L/2),\cdot))[n] \approx L^{-3}|n|^2\exp\left(-2\pi^2  \sigma^2|n|^2/L^2\right), $$
and so, by the convolution property of Fourier series, 
\begin{align*}
    \mathcal{F}\left(\Delta (\mathcal{G}_\sigma \mathcal{M}_\varepsilon)\right)[n] &\approx \mathcal{F}((\Delta_X g_\sigma)((L/2,L/2),\cdot))[n]\mathcal{F}(\varphi\mathcal{M}_\varepsilon)[n] \\
    & = L^{-3}|n|^2\exp\left(-2\pi^2  \sigma^2|n|^2/L^2\right)\mathcal{F}(\varphi\mathcal{M}_\varepsilon)[n] , \quad n \in \mathbb{Z}^2.
\end{align*}
Hence, we obtain a regularized reconstruction  $U_\sigma $ of $u|_\Omega =  \eta k^2$ by 
\begin{equation}
     U_\sigma(X) = \mathcal{F}^{-1}\left(L^{-3}|\cdot|^2\exp\left(-2\pi^2 \sigma^2|\cdot|^2 /L^2\right)\mathcal{F}(\varphi\mathcal{M}_\varepsilon)[\cdot]\right)(X), \quad X \in \Omega.
     \label{eq: U filtered}
\end{equation}
For the second step of the inversion, we use a simple, regularized division. Let $\mathcal{M}^\sigma_\varepsilon = \mathcal{G}_\sigma \mathcal{M}_\varepsilon,$ and for $\gamma > 0$, set 
\begin{equation}
    k_\sigma^2(X) = \frac{|U_\sigma(X)|}{|\mathcal{M}^\sigma_\varepsilon(X)| + \gamma}, \quad X \in \Omega. 
\end{equation}
As a heuristic for choosing $\sigma$, we use the assumption that the magnitude of the wavenumber of $\eta$ should be close to $k_i$. Hence, we do not want to suppress too much the frequencies $k_n = 2\pi|n|^2/L^2$ in our Fourier representation when $k_n \approx k_i$. Based on \eqref{eq: U filtered},  we therefore choose $\sigma$ such that $\sigma^2 \pi k_i^2 < 1$. 
Once $k_\sigma^2$ is computed, we choose a suitable $\alpha > 0$ and reconstruct $d_\alpha^\varepsilon$ from $k_\varepsilon$. 
\subsubsection*{Discrete reconstruction}
For simplicity, we now assume that the  discrete, noisy measurement $\mathcal{M}_\varepsilon^d$ is uniformly sampled on domain $\mathcal{D}^d = (nL/N,mL/N)_{n,m = 0}^{N-1}$. By the above considerations, we then suggest the following reconstruction method: 
\begin{itemize}
    \item[1:] Given the incoming wavenumber $k_i$, choose $\sigma$ such that $k_i^2 \pi  \sigma^2 < 1$
    \item[2:] Let $g_\sigma(L/2,L/2,\mathcal{D}^d)$ and $\Delta g_\sigma(L/2,L/2,\mathcal{D}^d)$ denote the matrices consisting the values of the Gaussian kernel and its Laplacian, respectively, centered in the middle of $\mathcal{D}$ and evaluated at $\mathcal{D}^d$. We denote their discrete Fourier transforms (DFTs) by $\hat{G}^d$ and $\hat{\Delta G^d}$, respectively. 
    \item[3:] Compute the DFT $\hat{\mathcal{M}}_\varepsilon^d$ of the measurement $\mathcal{M}_\varepsilon^d$. Compute
    \begin{align*}
    U_\sigma^d &= \mathcal{F}^{-1}_{\text{DFT}}\left( \hat{\mathcal{M}}_\varepsilon^d \circ \hat{\Delta G^d}\right), \\
    \mathcal{M}_\sigma^d &= \mathcal{F}^{-1}_{\text{DFT}}\left( \hat{\mathcal{M}}_\varepsilon^d \circ \hat{G^d}\right),
    \end{align*}
    where $\mathcal{F}^{-1}_{\text{DFT}}$ is the inverse DFT and $\circ$ denotes elementwise matrix multiplication. 
    \item[4:] Choose $\gamma > 0$ and $\alpha > 0 $ and compute 
    $$k_\varepsilon = \sqrt{\frac{|U_\sigma^d|}{|\mathcal{M}_\sigma^d| + \gamma }} \quad \text{and} \quad d_\alpha(k_\varepsilon).$$
    The choice of $\gamma$ and $\alpha$ should be based on apriori knowledge of the expected depth, the properties of the incoming wave and the noise level, as outlined in Remark \ref{remark: stability}. 
\end{itemize}
To test the proposed method, we must compute the solutions to the mild-slope equation. 

\subsection{Numerical solution of the forward problem}
\label{section: duan-rokhlin}
We discretize the weakly singular integral operator 
$$(Kq)u(X) = \int_\mathcal{D} G_{k_i}(X,Y)q(Y)u(Y) \mathrm{d}Y, \quad X \in \mathcal{D}$$
using the $4$'th order Duan-Rokhlin quadrature method \cite{duan2009high}. For $L > 0$ and $N \in \mathbb{N}$, let $\mathcal{D} = (0,L)\times (0,L)$ and let $\mathcal{D}^d = \{X_n\}_{n = 0}^{(N-1)^2}$ be a uniform discretization of $\mathcal{D}$, with $X_n = (i(n)h,j(n)h)$, suitable index functions $i(n),j(n)$ and step size $h = L/N$. 
We define the discrete operator $(Kq)^d \in \mathbb{C}^{N^2\times N^2} $ as
\begin{equation*}
    (Kq)^d_{m,n} = h^2 g(m,n,q) \quad \text{for} \quad  0 \leq m,n \leq (N-1)^2,
\end{equation*}
where 
\begin{equation*}
     g(m,n,q) = \begin{cases} \frac{i}{4}H^{(1)}_0(k_i|X_m - X_n|)q(X_n), \quad m \neq n, \\
    \alpha_0/h^2q(X_n) \quad, m = n, 
    \end{cases} 
\end{equation*}
and $\alpha_0$ is a diagonal correction term and is computed as in Lemma 3.9 in \cite{duan2009high}.   

With $(Kq)^d$ at hand, it is straightforward to solve the forward scattering problem using the Lippmann-Schwinger formulation. For a discretized incoming wave $\eta_i^d$ one solves the discretized equation 
\begin{equation}
    (I + k_i^2(Kq)^d)\eta^d = \eta^d_i.
    \label{eq: discrete LS}
\end{equation}
The waves plotted in Figure \ref{fig: 3dwave} and \ref{fig: depthandwaves} are found by solving the above problem, and due to the high-order method and low wavenumbers involved, the discretization parameter $N$ does not have to be very large to obtain accurate results.

\subsection{Experiments}
For the numerical experiments, we consider two different topographies, and for each, we consider incoming waves with two different frequencies.  The first, shallower topography $d_1$ is shown in Figure \ref{fig: depthandwaves}, and the second, deeper topography $d_2$ is shown in Figure \ref{fig: depthandwaves2}, both together with the resulting scattered wave with $\omega = 1$, and both satisfy the mild-slope condition \eqref{eq: mild slope condition}. The following summarizes the setup of the numerical experiments:
\begin{itemize}
    \item The measurement domain $\mathcal{D}$ is a square with side lengths $L = 100 m$.
    \item For each topography $d_1,d_2$ we consider incoming waves with angular velocities $\omega_1 = 1$ and $\omega_2 = 2$. 
    \item The measurement grid $\mathcal{D}^d$  is a set of $100\times 100$ equispaced points in $\mathcal{D}$ as in \ref{Section: discrete inversion}. The measurement $\mathcal{M}^d$ is found by linearly interpolating the solution $\eta$ computed on a finer grid and sampling it on $\mathcal{D}^d$. 
    Each depth profile is on the form $d_j = H_{0,j} - h_j$, and we assume the wavenumbers of the incoming waves are one the form $\bm{k} = (k,0)$.  We therefore have for each depth profile the incoming wave $\eta_{i,j,n} = a\exp(\bm{k}_{i,j,n} \cdot X)$  
    where 
    $$ k_{i,j,n} \text{ satisfies }  \frac{\omega_n^2}{g} = k_{i,j,n}\tanh(k_{i,j,n} H_{0,j}), \quad j=1,2, \quad n = 1,2.$$
    As $H_{0,1} = 1.5 $ and $H_{0,1} = 2.5$, we get 
    $$ k_{i,1,1} = 0.267, \quad k_{i,1,2} = 0.581, \quad , k_{i,2,1} = 0.211 ,\quad k_{i,2,2} = 0.486 .$$
    As we are in the linear regime, the amplitude $a$ should fulfill $ak \ll 1$, and we can take $a=0.3$.  
   
    \item In reality, the water surface will, under most conditions, deviate quite a lot from the smooth solution $\eta$ of the mild-slope equation.  Small, local ripples generated by wind, turbulence, non-linear effects like wave-breaking, optical reflections, and the fact that the full wavefield is likely to be a superposition of many waves make abundant the possible sources for the measurement $\mathcal{M}^d_\varepsilon$ to deviate from the model wave $\eta$. It is beyond the scope of this paper to investigate these matters properly. Instead, we choose the simplest strategy, and add $10 \% $ relative noise to the measurements. The noise will be taken from a  identically distributed, independent Gaussian distribution, i.e.,   $\varepsilon^d \sim \mathcal{N}(0,I_{d}\sigma^2)$, and scaled such that 
    $$ \frac{\|\varepsilon^d\|_2}{\|\mathcal{M}^d\|_2} = 0.1 \quad \text{and} \quad \mathcal{M}_\varepsilon^d = \mathcal{M}^d +\varepsilon^d .$$
    \item We solve the inverse problem with data $\mathcal{M}_\varepsilon^d$ as outlined in Section \ref{Section: discrete inversion}, and denote the reconstructed wavenumbers corresponding to the incoming wave $k_{i,j,n}$ by $k_{\varepsilon,j,n}$ (we the same labeling for other reconstructed quantities). For the reconstruction from incoming waves with $\omega_1$ we take $\sigma = 2.5 $ and $\gamma = 0.001$, while for $\omega_2$ with take $\sigma = 1.5$ and $\gamma = 0.001$. In both cases we use $\alpha = 0.1$. 
    \item Results are shown Figures \ref{fig: rec1}, \ref{fig: rel_err} and \ref{fig: abs_err}, and error norms are given in Table \ref{table: errors}. All reconstructions have been truncated near the edges, due to the numerically induced errors there. 
\end{itemize}
\begin{figure}[H]
    \centering
    \includegraphics[width = 1\textwidth]{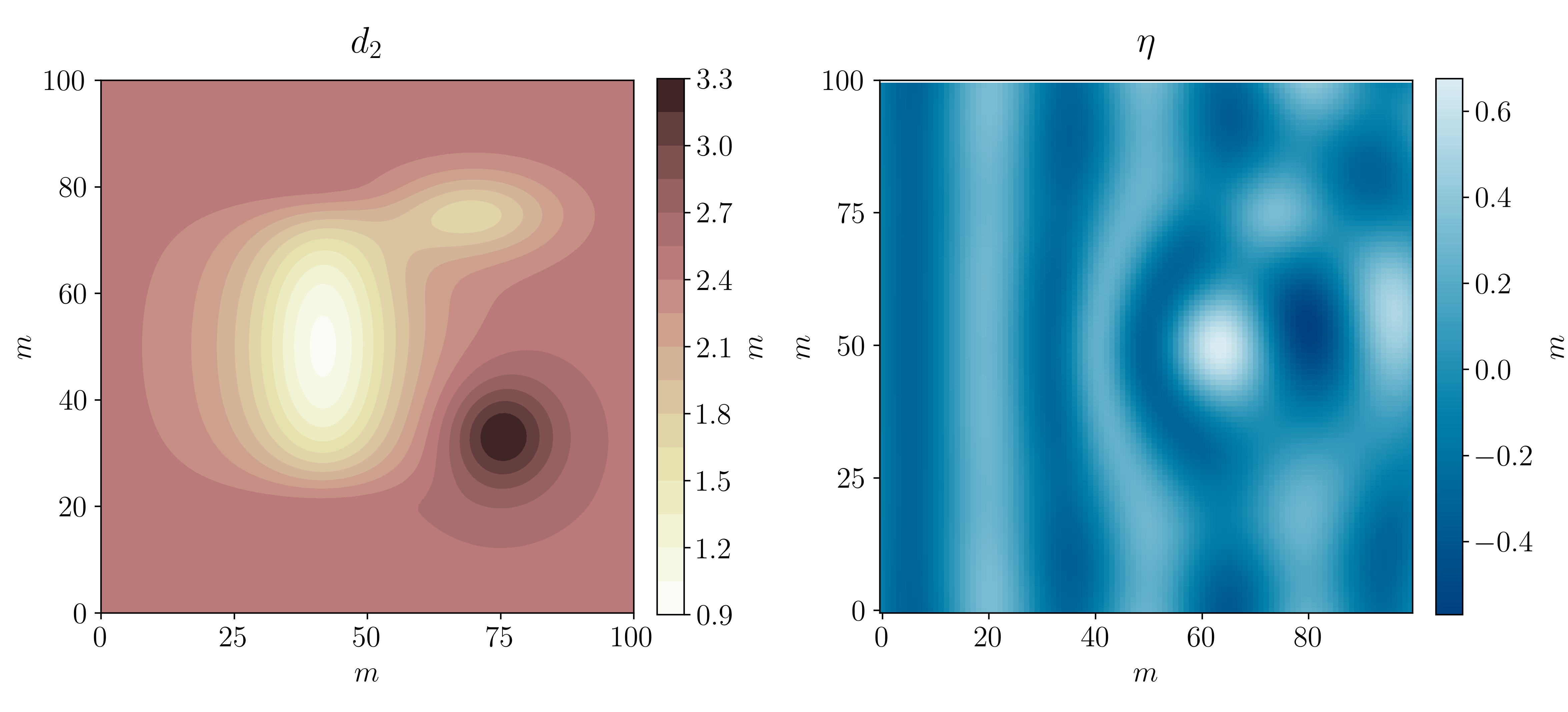}
    \caption{To the left is a contour plot of the deeper topography $d_2(X)$, and to the right, we see the water wave $\text{Re}\{\eta(X)\}$. }
    \label{fig: depthandwaves2}
\end{figure}

\subsubsection*{Results from numerical experiments}
Figure \ref{fig: rec1} shows the true topographies together with the reconstructed topographies for the different frequencies of incoming waves. In Figure \ref{fig: rel_err}, we plot the relative error for the same reconstructed topographies, and Figure \ref{fig: abs_err} shows the absolute error. 

For the longer incoming waves ($\omega = 1$), we see that we get satisfactory reconstructions for both the shallow and deep topographies, and that the whole depth range is reconstructed quite well. In the reconstruction of $d_2$, we see that both the relative and absolute error increase slightly at lower depths. This seems to be a consequence of the fact that lower depths correspond to higher wavenumbers, and that the low-pass filter we use suppresses the higher frequencies more. 
Moreover, one can see certain artifacts in the reconstructions corresponding to $\mathcal{M}_\varepsilon \approx 0$. These artifacts appear mostly near the edges, but also some places in the interior, for example in the deepest region of $d_2$. In potential applications, such artifacts can easily be identified and removed. 

In the reconstructions from the shorter waves $(\omega = 2)$, results are not so good. For the shallow waters, the topography is still visible, but the relative error is close to $100\%$ in the shallowest regions. For the deep topography, we see that we basically get no useful information, indicating that the combination of wavenumber and depth is outside the range of applicability of the method. The fact that the error is small in the deeper region results from the fact that the $d_\alpha(k_\varepsilon) \approx H_0$ when $k_\varepsilon < \mu + \alpha$. Had this not been the case, the error could have been much larger, but the assumption that the bottom satisfies the mild-slope condition limits the maximal depth difference in a given region, and so it also limits the error in $d_\alpha$. 
In Figure \ref{fig: shortwave}, the waves corresponding to $\omega = 2$ are plotted. When comparing to the very visible scattering patterns observed in Figures \ref{fig: depthandwaves} and \ref{fig: depthandwaves2}, it is clear that the influence from the bottom is less visible in the shorter waves, giving an intuitive explanation for the poor reconstruction results from the shorter waves.  

We have also conducted the numerical experiments using a numerical method based on the integral equation formulation in \eqref{eq: bandK} and using Tikhonov regularization, and the results were quite similar to those presented above but much more cumbersome to implement, as it requires generating huge linear systems. 

\begin{table}
\centering
\begin{tabular}{c c c c} 
 $\dfrac{\|d_1 - d_\alpha(k_{\varepsilon,1,1})\|_2}{\|\mathcal{M}_{1,1}\|_2}$ & $\dfrac{\|d_2 - d_\alpha(k_{\varepsilon,1,2})\|_2}{\|\mathcal{M}_{1,2}\|_2}$ & $\dfrac{\|d_1 - d_\alpha(k_{\varepsilon,2,1})\|_2}{\|\mathcal{M}_{2,2}\|_2}$ & $\dfrac{\|d_2 - d_\alpha(k_{\varepsilon,2,2})\|_2}{\|\mathcal{M}_{2,2}\|_2}$ \\  
 \hline
$0.27$ & $0.58$ & $0.60$ & $1.3$ \\ 
\end{tabular}
\caption{}
\label{table: errors}
\centering
\end{table}

\begin{figure}
    \centering
     \makebox[\textwidth][c]{\includegraphics[width = 1.05\textwidth]{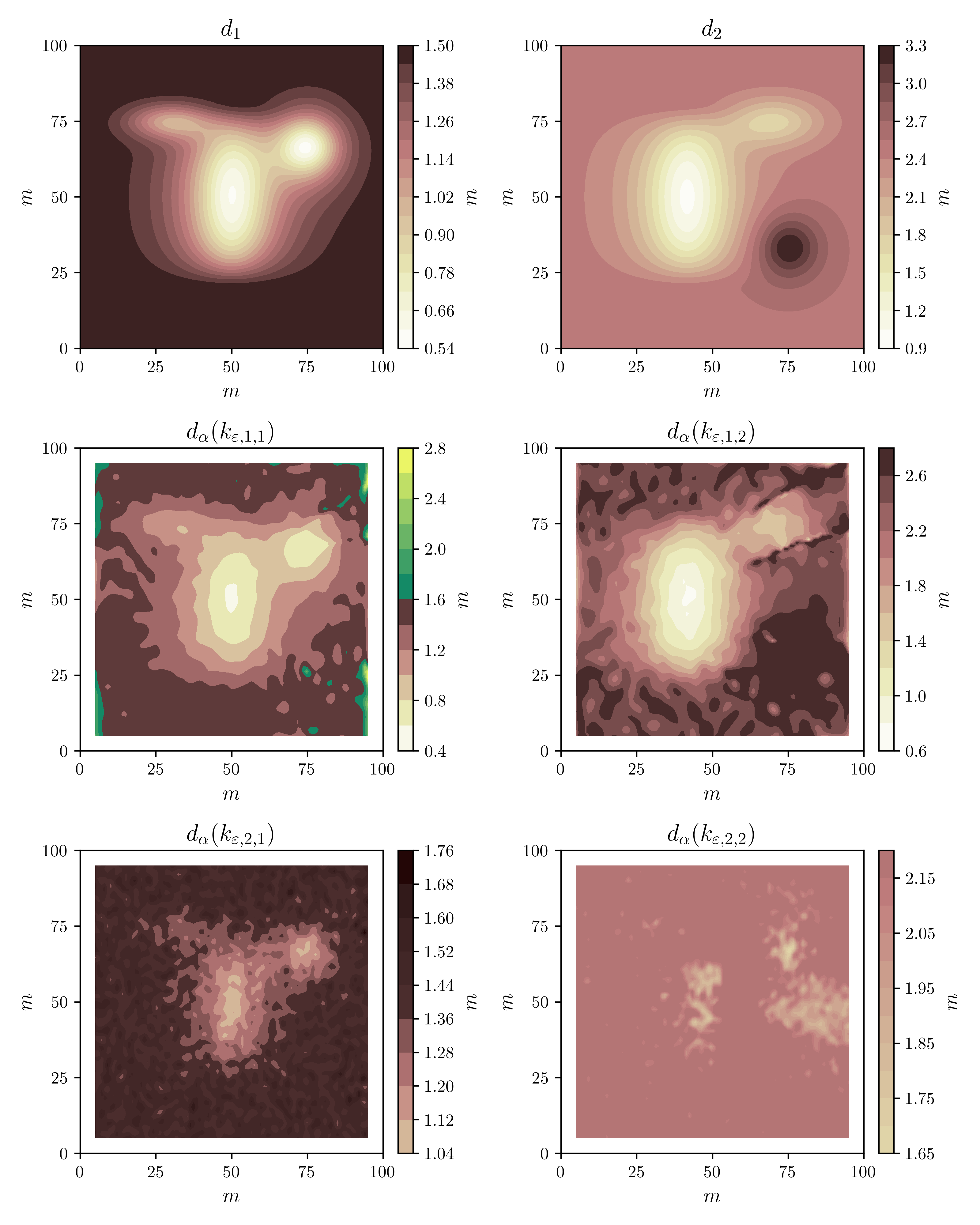}}
    \caption{On the top left and right are contour plots of the depth functions $d_j(X)$ on a $100 \times 100$ meter domain. In the middle row are the reconstructions of the profiles $d_\alpha(k_\varepsilon)$ from the incoming waves with $\omega = 1$, and in the bottom row, the reconstructions $d_\alpha(k_\varepsilon)$ from the incoming waves with $\omega = 2$. } 
    \label{fig: rec1}
\end{figure}

\begin{figure}
    \centering
     \makebox[\textwidth][c]{\includegraphics[width = 1.2\textwidth]{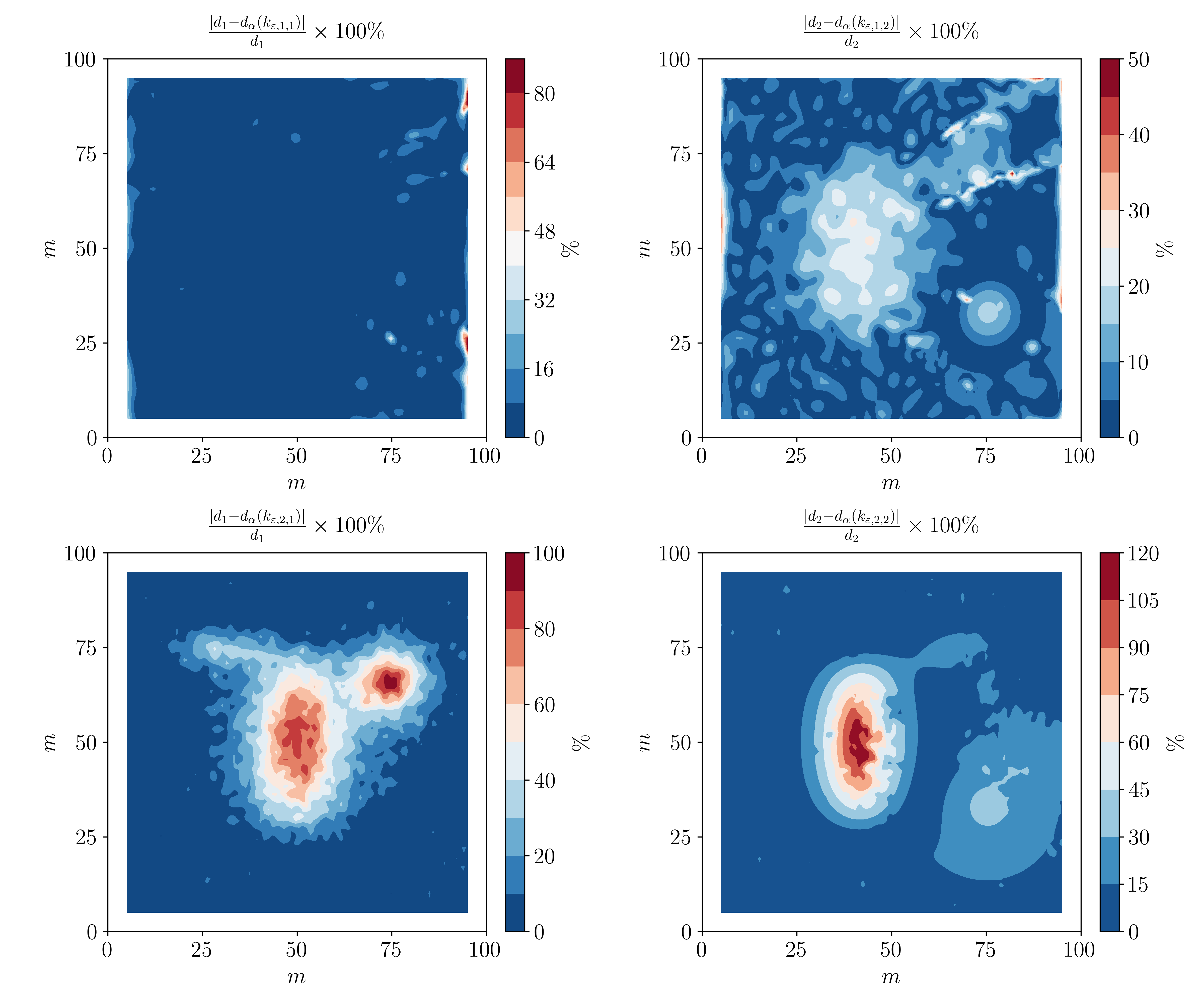}}
    \caption{Relative errors (in percentages) of the reconstructions shown in Figure \ref{fig: rec1}.}
    \label{fig: rel_err}
\end{figure}

\begin{figure}
    \centering
     \makebox[\textwidth][c]{\includegraphics[width = 1.2\textwidth]{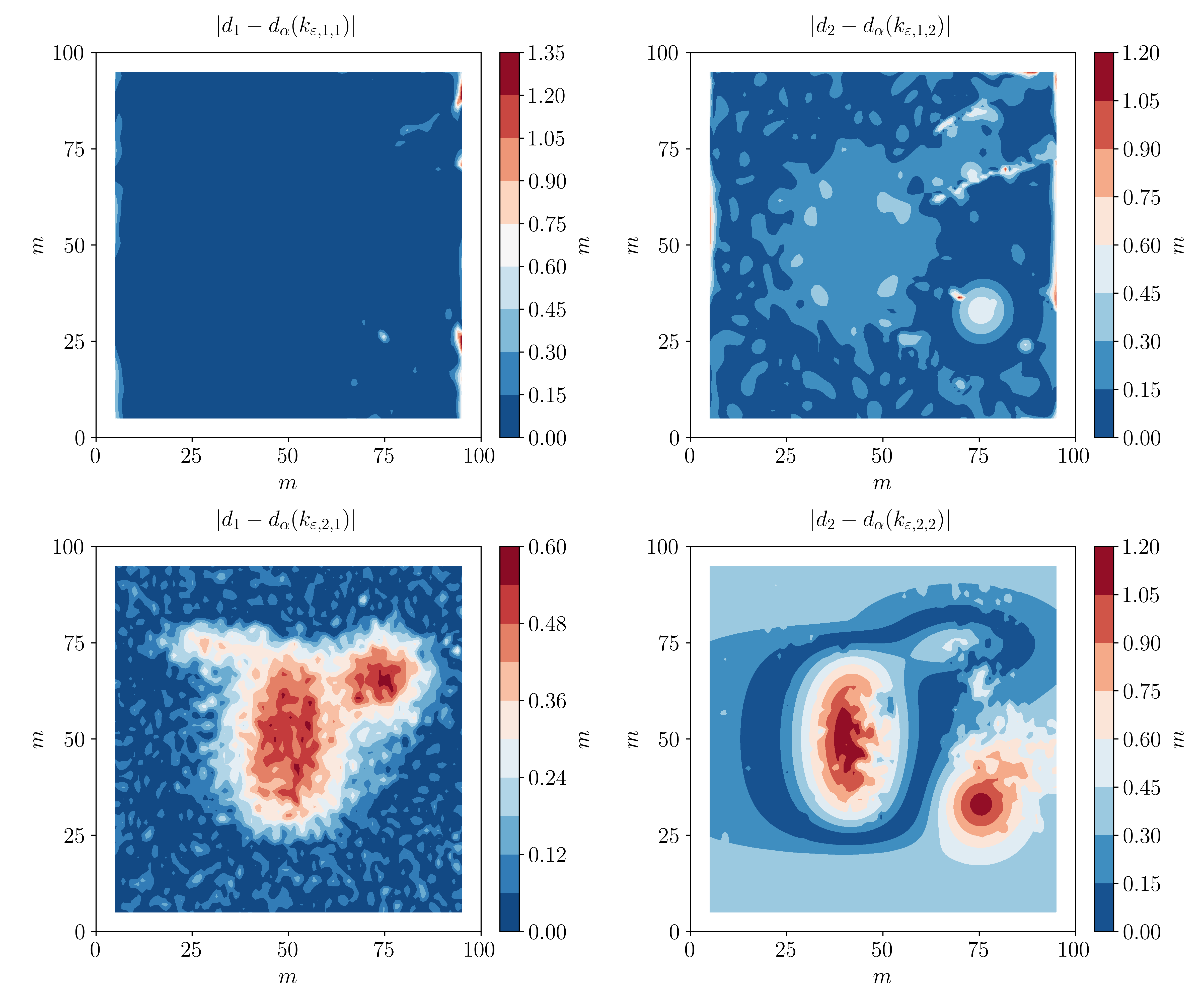}}
    \caption{Absolute errors of the reconstructions shown in Figure \ref{fig: rec1}. }
    \label{fig: abs_err}
\end{figure}

\begin{figure}
    \centering
     \makebox[\textwidth][c]{\includegraphics[width = 1.2\textwidth]{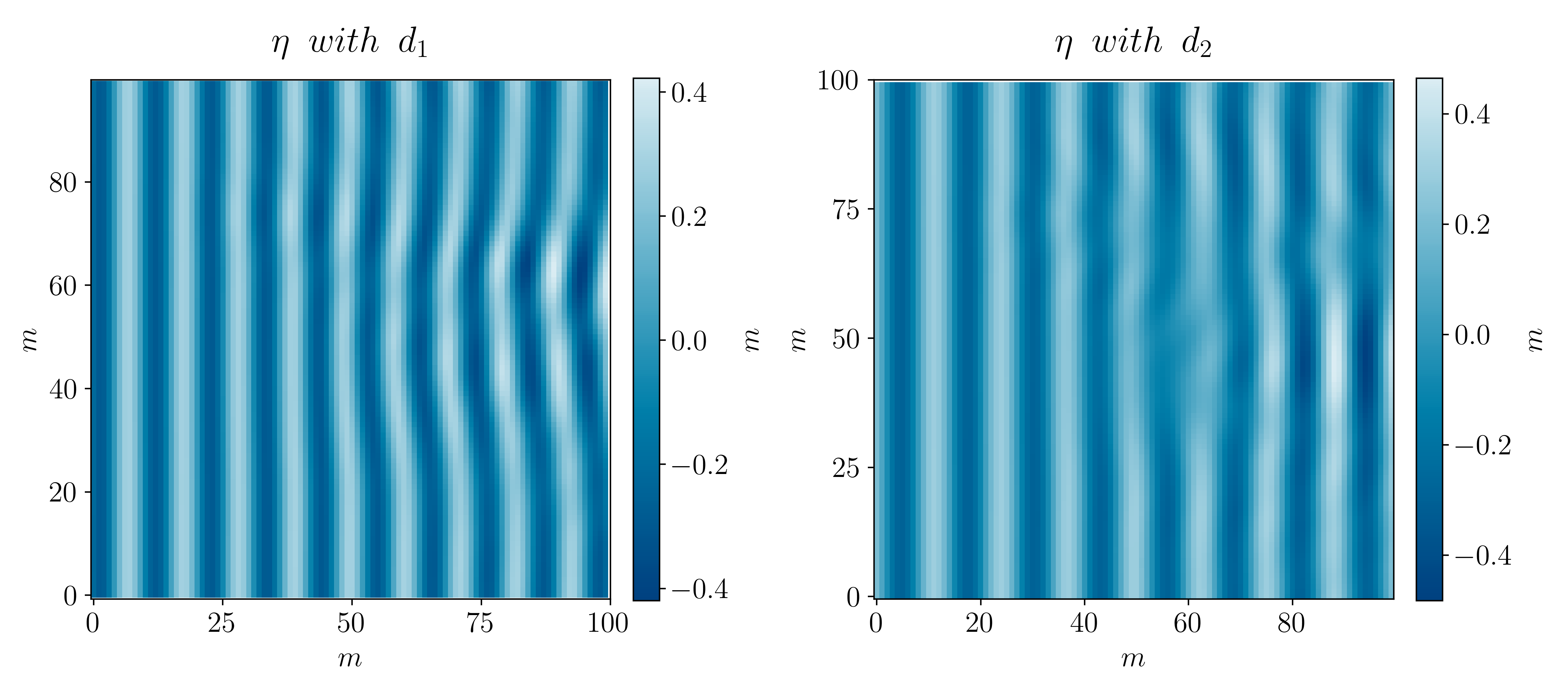}}
    \caption{The plot shows the real part of the wave $\eta$ corresponding to the frequency $\omega = 2$. On the left is the wave propagating over the topography $d_1$, and on the right the wave is propagating over the topography $d_2$. }
    \label{fig: shortwave}
\end{figure}

\section{Conclusion}
\label{sect: conclusion}

We have analyzed the problem of reconstructing the seabed topography from observations of water waves scattered by the depth variations. We modeled the waves using the mild-slope equation and showed that for longer waves, the equation can be simplified, making it more suitable for inversion. We then showed that the inverse problem is conditionally Hölder-stable, with a constant that depends explicitly on the critical parameters $\alpha, \gamma$, and $\mu$. Moreover, we proposed a simple and fast inversion method and tested it on synthetic data, and the results confirmed the analysis in Sections \ref{sect: wave scattering}-\ref{sect: inversion} and showed that the inversion method works quite well. 

We may therefore conclude that for physical conditions satisfying the mild-slope equation, and if the incoming waves are sufficiently long compared to the depth, it is possible to stably reconstruct the seabed topography.

We hope that the approach taken to estimating seabed topography in this paper will be of interest both to oceanographers and mathematicians (and others). For real applications, it gives a simple reconstruction method together with a concrete set of assumptions on the waves and seabed for the method to be applicable and give reliable results. For mathematicians, the analysis of inverse problems for water waves presents many interesting problems; specifically, the large number of different asymptotic models that exist for water waves, each describing different regimes, should be of interest. The fact that full wave field can be observed means that one has access to more information than for the usual inverse problems for, e.g., electromagnetic or acoustic waves, where one usually measures the wave field at points or on a surface. In addition, there are many other physical processes in the ocean that influence the surface waves. Currents, internal waves, and turbulence are examples of processes that are difficult to measure directly but might be possible to estimate due to their effect on the surface waves, giving rise to interesting and challenging inverse problems.

\section*{Appendix}

This section contains the proofs of Propositions \ref{prop: uniqueness}-\ref{prop: tikhonov} and the derivation of the estimates used in the proof of Proposition \ref{prop: simplification}. 

\begin{proof}[\textbf{Proof of Proposition \ref{prop: K}:}]
The injectivity of $K$ follows from the uniqueness of the source problem for the Helmholtz equation: Assume there are $v \neq \tilde{v}$ both given by $K\tilde{u} $. Then $ v - \tilde{v} = V|_\mathcal{D}$, where $V$ satisfies  $(\Delta + k^2_i)V = 0$ in $\R^2$ and the radiation condition, and consequently $V = 0$. 
The compactness follows from the fact that $K: L^2(\R^2) \to H^2_{loc}(\R^2)$ and that $H^2(\mathcal{D})$ is compactly embedded in $L^2(\mathcal{D})$. 

To estimate the singular values of $K$, we rely on Theorem 2.6 in \cite{koch2020instability}. It states that for a (classical) compact pseudo-differential operator (PDO) ${A: L^2(M) \to L^2(M)}$ of order $-m$, where $M$ is a smooth $n$-dimensional manifold, and such that the principal symbol is non-vanishing for some $(x_0,\xi_0) \in T^*M$, the singular values $s_k(A)$ of $A$ satisfies ${ c k^{-m/n} \leq  s_k(A) \leq Ck^{-m/n}}$, for constants $0<c\leq C$. (Both the necessary background on PDOs and the results used in the rest of the proof are found in chapters 6 \& 7 in \cite{hsiao2008boundary})

To apply this theorem, we write $K = \chi_{\mathcal{D}}V$, where $Vu(X) = \int_{\mathcal{D}} G_{k_i}(X,Y) u(Y) \mathrm{d}Y$ and $\chi_{\mathcal{D}}$ is the characteristic function for $\mathcal{D}$. Following the development in Chapter 7, \cite{hsiao2008boundary}, we note that the kernel of $V$, $G_{k_i}(X,Y) = \frac{i}{4}H^{(1)}_0(k|X-Y|) = g(z) $, is pseudohomogeneous function of order $0$ with respect to  $z = X-Y$. Consequently, we can use Theorem 7.1.1 in \cite{hsiao2008boundary} that identifies when an integral operator is a PDO, and since we are in $\R^2$, we conclude that the operator $V$ is a classical PDO of order $-2$.  

Next, let $\mathcal{D}_\delta \subset \mathcal{D}$ be a smooth domain such that $0 < \text{dist}(\partial \mathcal{D}_\delta, \partial \mathcal{D}) < \delta $, and let $\varphi_\delta \in C^\infty_0(\mathcal{D})$ be a characteristic function on $\mathcal{D}_\delta$. We define ${K_\delta = \varphi_\delta(x)V}$. Then $K_\delta : L^2(\mathcal{D}) \to L^2(\mathcal{D})$ is a PDO of order $-2$, and its symbol is ${q \sim \varphi_\delta(x)(k_i^2 - |\xi|^2)^{-1}g(\xi)}$, where $g$ is a smooth cut-off function such that $g(\xi) = 0 $ for $|\xi| = k_i$ and $g(\xi) = 1$ for $|\xi| \gg k_i$. Hence, the principal symbol is non-vanishing for some $(x_0,\xi_0) \in T^*\mathcal{D}$ and we can apply the result from \cite{koch2020instability} to $K_\delta$. We now use the fact that we can bound the difference between the singular values of compact operators by the difference of the operators (Corollary 2.3, \cite{gohberg1978introduction}), i.e., 
\begin{equation}
    |s_n(K) - s_n(K_\delta) | \leq \|K - K_\delta\|_{\mathcal{L}(L^2(\mathcal{D}),L^2(\mathcal{D}))}, \quad n = 1,2,3... 
    \label{eq: singval bound}
\end{equation}   
We write $v_f = Vf$. Moreover, 
\begin{align*}
    \|v_f\|_{L^\infty(\mathcal{D})} &=  \sup_{X \in \mathcal{D}}\bigg| \int_{\mathcal{D}_\delta }G_{k_i}(X,Y) f(y) \mathrm{d}Y \bigg| \\
    & \leq \sup_{X \in \mathcal{D}} \|G_{k_i}(X,\cdot)\|_{L^2(\mathcal{D})}\|f\|_{L^2(\mathcal{D})}.
\end{align*} 
As $X \mapsto \|G_{k_i}(X,\cdot)\|_{L^2(\mathcal{D})}$ is continuous it has a maximum $X_m$ in $\overline{\mathcal{D}}$. Let $B_{X_m,R}$ be a disk of radius $R$ centered at $X_m$ and containing $\overline{\mathcal{D}}$. As $|H_0^{(1)}(z)|^2 = J_0^2(z) + Y_0^2(z)$, we have for $R$ large enough that  
\begin{align*} \|G_{k_i}(X_m,\cdot)\|_{L^2(\mathcal{D})}^2  &\leq \frac{1}{16}\int_0^R|H^{(1)}_0(k_ir)|^2 r \mathrm{d}r \\
&= \frac{R^2}{32}\left( J_0^2(k_iR) + J_1^2(k_iR) + Y_0^2(k_iR) + Y_1^2(k_iR) \right)  \leq \frac{R}{\pi k_i}.
\end{align*}
Here, $J_n,Y_n$ are, respectively, Bessel functions of the first and second kind and order $n$, and the last inequality follows from their asymptotic expansions $J_n^2(r) \sim Y_n^2(r) \leq 2\times \frac{2}{\pi r}$ for large $r $. (cf. Chapter 9 in \cite{abramowitz1968handbook}). 
We then compute
\begin{align*}
    \|K - K_\delta\|_{\mathcal{L}(L^2(\mathcal{D}),L^2(\mathcal{D}))} &= \sup_{\|f\|_{L^2(\mathcal{D})} \leq 1} \| v_f(1 - \varphi_\delta) \|_{L^2(\mathcal{D})} \\
    &\leq \sup_{\|f\|_{L^2(\mathcal{D})} \leq 1}  \|v_f\|_{L^\infty(\mathcal{D})}\|1 - \varphi_\delta \|_{L^2(\mathcal{D})} \\
     &\leq \sqrt{\frac{R}{\pi k_i}}\|1 - \varphi_\delta \|_{L^2(\mathcal{D})}. 
\end{align*} 
As the term  $\|1 - \varphi_\delta \|_{L^2(\mathcal{D})}$ can be made arbitrary small by decreasing $\delta$, it follows from \eqref{eq: singval bound} that 
$$ |s_n(K) - s_n(K_\delta)| \leq \varepsilon \quad n = 1,2,3,... $$
for any $\varepsilon > 0$, and so there exist $0 < c \leq C$ such that $$cn^{-1} \leq s_n(K) \leq Cn^{-1} \quad \text{for } n = 1,2,3,... \hspace{1mm}.$$ 
\end{proof}

\begin{proof}[\textbf{Proof of Proposition \ref{prop: uniqueness}:}]
    As $K$ is injective, there exists a unique solution $u = \mathcal{M}q$ to the equation $k_i^2Ku = b = \mathcal{M} - \eta_i$. Consequently $q = u/\mathcal{M}$ and $k(X) = k_i\sqrt{1- q(X)} $, and so $d(X) = \frac{\tanh^{-1}(\mu/k(X))}{k(X)}$. 

    For the local uniqueness result, we apply the unique continuation property. The unique continuation property states that if $u \in H^1(\tilde{\mathcal{D}})$ satisfies $Lu = 0$, where $\tilde{\mathcal{D}} \in \R^n$ is a bounded Lipschitz domain and $L$ is a second-order elliptic differential operator with continuous coefficients, then $u|_\mathcal{O} = 0$ implies $u = 0$, where $\mathcal{O} \in \mathcal{D}$ is any non-empty, open set (cf. Section 2 in \cite{choulli2016applications}).
    
    As the wave $\eta$ satisfies the MSE, and the MSE is a second-order elliptic PDE with continuous coefficients, we may apply unique continuation property to conclude that for any $\mathcal{O}$ is any non-empty open set in $\R^2$, a local measurement $\mathcal{M} = \eta|_{\mathcal{O}}$ uniquely determines $\eta$ on the set $\mathcal{D}$, and so it uniquely determines $d$.  
\end{proof}

\begin{proof}[\textbf{Proof of Proposition \ref{prop: tikhonov}:}]
We write \eqref{eq: fredholm} as $Ku = b/k_i^2$ and note that $$\|b_\varepsilon/k_i^2 - b/k_i^2\|_{L^2(\mathcal{D})} = \|k_i^{-2}(\mathcal{M}_\varepsilon - \mathcal{M}) \|_{L^2(\mathcal{D})} \leq \delta/k_i^2.$$ Let $u_\delta^\lambda$ be the solution to \eqref{eq: tikhonov}. The approximation error $\|u_\delta^\lambda - u\|_{L^2(\mathcal{D})}$, between the regularized and true solution to $Ku = b$ is bounded when the true solution $u$ satisfies a so-called source condition. For example, Theorem 8.4 in \cite{hanke2017taste} states that if $R(K)$ is dense in $L^2(\mathcal{D})$, $\|b_\varepsilon - b \|_{L^2(\mathcal{D})} \leq \delta $ for some $\delta > 0$ and $u$ is on the form $u = K^*w$, where $K^*$ is the adjoint of $K$ and $w \in L^2(\mathcal{D}))$, then there exists $\lambda > 0$ such that $\| u_\delta^\lambda - u \|_{L^2(\mathcal{D}} \leq 2 \sqrt{\|w\|_{L^2(\mathcal{D})} }\sqrt{\delta}.$

We now show that $u$ satisfies this source condition. It is a quick computation to show that the adjoint of $K$ is $K^*w = \int_\mathcal{D} \overline{G}(X,Y) w(Y) \mathrm{d}Y$, where $\overline{G}(X,Y)$ denotes the complex conjugate. In fact,  $\overline{G}(X,Y) = -\frac{i}{4}H^{(2)}_0(k_i|X-Y|)$ is the Hankel function of second kind and order $0$. This is the fundamental solution to the Helmholtz equation with \emph{incoming} radiation conditions \cite{hubert2012vibration, schot1992eighty}. Therefore $v = K^*w$ is given by  $ v = V|_\mathcal{D}$ where 
$V$ is the unique solution to $(\Delta + k_i^2)V = -w $ in $\R^2$ satisfying the incoming radiation condition $$\lim_{|X| \to \infty } \sqrt{|X|}\left( \partial_{|X|} + ik\right) V = 0, \quad \text{uniformly for } X/|X| \in S^1.$$ 
We show that there exists a $w \in L^2(\mathcal{D})$ such that $u = \eta q = K^*w$. 
For the contrast function $q = 1 - \frac{k^2}{k_i^2}$ we have $\text{supp}(q) = \text{supp}(h) = \Omega \subset \mathcal{D}$ and so $\text{supp}(u = \eta q) \subset \mathcal{D}.$ By assumption of Proposition \ref{prop: wellposed}, $d(X) \in C^2(\Omega)$, and so is $q$, and in the proof of Proposition \ref{prop: simplification}, we saw that $\eta(X) \in C^1(\R^2)$. Consequently, $u = \eta q \in C^1_0(\Omega)$. As in the proof of Proposition \ref{prop: simplification}, $\partial_{x_i}G_{k_i}(X,Y) \in L^2_{loc}(\R^2)$, 
and so differentiation of the convolution operator satisfies $$\partial_{x_i} \int_{\R^2}G_{k_i}(X,Y) u(Y) \mathrm{d}Y =  \int_{\R^2}\partial_{x_i}G_{k_i}(X,Y) u(Y) \mathrm{d}Y = \int_{\R^2}G_{k_i}(X,Y) \partial_{x_i}u(Y) \mathrm{d}Y.$$
It follows that
$$ \partial^2_{x_i,x_j} \int_{\R^2}G_{k_i}(X,Y) u(Y) \mathrm{d}Y = \int_{\R^2} \partial_{x_j}G_{k_i}(X,Y) \partial_{x_i}u(Y) \mathrm{d}Y, $$
the since $\partial_{x_j}G_{k_i}(X,Y)$ is weakly singular on $\R^2$, the above operator maps $C(\R^2)$ to $C(\R^2)$. Recalling that 
$$\eta = \eta_i - k_i^2\int_{\R^2}G_{k_i}(X,Y) u(Y) \mathrm{d}Y,$$
it follows that $\partial^2_{x_i,x_j} \eta \in C(\R^2).$ Therefore, $u = \eta q \in C^2_0(\Omega)$, and so $u$ is the solution to $(\Delta +k_i^2)u = -\tilde{w}$, where $\tilde{w} = -(\Delta + k_i^2)u \in L^2(\Omega)$. Letting $w$ be the extension by zero of $\tilde{w}$ to $\mathcal{D}$, we conclude that there exist $w \in L^2(\mathcal{D})$ such that  $ u = K^*w$. It also follows that $\|w\|_{L^2(\mathcal{D})} \leq C \|\eta q\|_{H^2(\mathcal{D})}$.  
Last, as $\overline{R(K)} = N(K)^\perp = L^2(\mathcal{D})$, we may apply Theorem 8.4 in \cite{hanke2017taste} and conclude that there exists some regularization parameter $\lambda$ such that our proposition holds. 
\end{proof}

Before we give the proof of Proposition \ref{prop: stability}, we need the following result. 

\begin{proposition}
    Let $\gamma > 0$ be such that $\mathcal{D}_\gamma $ is non-empty. Then there is some constant $C> 0$ depending on $q,\varepsilon$ and $\gamma$ such that 
    $$ \|q - q_\varepsilon \|_{L^1(\mathcal{D}_\gamma)} \leq C \|\mathcal{M} - \mathcal{M}_\varepsilon\|_{L^2(\mathcal{D})}. $$
    \label{prop: q_stability}
\end{proposition}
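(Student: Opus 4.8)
The plan is to reduce the statement to the two ingredients already in hand: the mild ill-posedness controlled by Tikhonov regularization in Proposition \ref{prop: tikhonov}, and the lower bound $|\mathcal{M}_\varepsilon| \ge \gamma$ that holds by definition on $\mathcal{D}_\gamma$. Recall that the reconstructed contrast is $q_\varepsilon = u_\varepsilon/\mathcal{M}_\varepsilon$, where $u_\varepsilon = u_\delta^\lambda$ is the Tikhonov solution of $k_i^2 K u = b_\varepsilon$, while the true contrast satisfies $q = u/\mathcal{M}$ with $u = \mathcal{M} q$. The first step is the algebraic identity obtained by dividing through $\mathcal{M}_\varepsilon$ (and never through the possibly vanishing $\mathcal{M}$): writing $q\mathcal{M}_\varepsilon = u + q(\mathcal{M}_\varepsilon - \mathcal{M})$ gives
\begin{equation*}
q - q_\varepsilon = \frac{(u - u_\varepsilon) + q(\mathcal{M}_\varepsilon - \mathcal{M})}{\mathcal{M}_\varepsilon}.
\end{equation*}
Choosing this decomposition, rather than the symmetric one carrying $\mathcal{M}\mathcal{M}_\varepsilon$ in the denominator, is what keeps the estimate well defined, since $\mathcal{M}$ may vanish on parts of $\mathcal{D}_\gamma$.

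On $\mathcal{D}_\gamma$ we have $|\mathcal{M}_\varepsilon| \ge \gamma$, so $|q - q_\varepsilon| \le \gamma^{-1}(|u - u_\varepsilon| + |q|\,|\mathcal{M}_\varepsilon - \mathcal{M}|)$. Integrating over $\mathcal{D}_\gamma$ and applying Cauchy--Schwarz to each term yields
\begin{equation*}
\|q - q_\varepsilon\|_{L^1(\mathcal{D}_\gamma)} \le \gamma^{-1}\Big( |\mathcal{D}|^{1/2}\|u - u_\varepsilon\|_{L^2(\mathcal{D})} + \|q\|_{L^2(\mathcal{D})}\|\mathcal{M}_\varepsilon - \mathcal{M}\|_{L^2(\mathcal{D})}\Big).
\end{equation*}
The second term is already linear in $\|\mathcal{M}_\varepsilon - \mathcal{M}\|_{L^2(\mathcal{D})}$ with constant $\gamma^{-1}\|q\|_{L^2(\mathcal{D})}$, depending only on $q$ and $\gamma$.

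It remains to control $\|u - u_\varepsilon\|_{L^2(\mathcal{D})}$, and this is where the only subtle point enters. Proposition \ref{prop: tikhonov} gives $\|u - u_\varepsilon\|_{L^2(\mathcal{D})} \le C\sqrt{\|u\|_{H^2(\mathcal{D})}}\,\sqrt{\delta}/k_i$ with $\delta = \|\mathcal{M}_\varepsilon - \mathcal{M}\|_{L^2(\mathcal{D})}$, which is only Hölder-$\tfrac12$ in the data. To upgrade this to the claimed \emph{linear} bound I would use that the noise function $\varepsilon$ is fixed, so that $\delta = \|\varepsilon\|_{L^2(\mathcal{D})}$ is a fixed positive number and $\sqrt{\delta} = \|\varepsilon\|_{L^2(\mathcal{D})}^{-1/2}\,\delta$ \emph{exactly}. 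Substituting this identity turns the Tikhonov estimate into a linear bound whose constant $C\sqrt{\|u\|_{H^2(\mathcal{D})}}\,(k_i\sqrt{\|\varepsilon\|_{L^2(\mathcal{D})}})^{-1}$ necessarily depends on $\varepsilon$ (through $\|\varepsilon\|_{L^2(\mathcal{D})}$) and on $q$ (through $u = \mathcal{M}q$), and blows up as the noise level tends to zero. Collecting the two terms and absorbing the factors $|\mathcal{D}|^{1/2}$, $k_i$, $\|u\|_{H^2(\mathcal{D})}$, $\|q\|_{L^2(\mathcal{D})}$ and $\gamma^{-1}$ into a single constant $C = C(q,\varepsilon,\gamma)$ completes the proof. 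The main obstacle is conceptual rather than computational: one must recognise that a genuinely linear estimate is available only at a fixed noise realisation, which is exactly what forces the constant to depend on $\varepsilon$.
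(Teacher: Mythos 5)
Your proposal is correct and follows essentially the same route as the paper's own proof: the identical decomposition $q - q_\varepsilon = \bigl((u - u_\varepsilon) + q(\mathcal{M}_\varepsilon - \mathcal{M})\bigr)/\mathcal{M}_\varepsilon$, the lower bound $|\mathcal{M}_\varepsilon| \geq \gamma$ on $\mathcal{D}_\gamma$, Cauchy--Schwarz after integrating, and Proposition \ref{prop: tikhonov} to control the Tikhonov error $u - u_\varepsilon$. If anything, you are more explicit than the paper on the one delicate point: the paper's final displayed inequality silently treats the Tikhonov error as linear in $\|\mathcal{M}_\varepsilon - \mathcal{M}\|_{L^2(\mathcal{D})}$ even though Proposition \ref{prop: tikhonov} only gives a $\sqrt{\delta}$ bound, and this is justified precisely by your device of writing $\sqrt{\delta} = \delta/\sqrt{\delta}$ at the fixed noise realisation and absorbing $\delta^{-1/2}$ into the constant --- which is exactly why the statement allows $C$ to depend on $\varepsilon$.
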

\begin{proof}
   
As $|\mathcal{M}_\varepsilon| \geq \gamma$, we have 
\begin{equation*}
        q_\varepsilon = \frac{u_\delta^\lambda}{\mathcal{M}_\varepsilon} = \frac{\mathcal{M}q + \tilde{\varepsilon}}{\mathcal{M} + \varepsilon} = \frac{\tilde{\varepsilon}}{\mathcal{M} + \varepsilon} + \frac{\mathcal{M}q}{\mathcal{M} + \varepsilon}.
\end{equation*}
Hence 
$$ q_\varepsilon - q =   q\left( \frac{\mathcal{M}}{\mathcal{M} + \varepsilon}  - 1\right) + \frac{\tilde{\varepsilon}}{\mathcal{M} + \varepsilon} = q \frac{-\varepsilon}{\mathcal{M} + \varepsilon}   + \frac{\tilde{\varepsilon}}{\mathcal{M} + \varepsilon}$$
and so
$$ |q_\varepsilon - q| \leq   \|q\|_\infty \frac{|\varepsilon|}{\gamma} + \frac{|\tilde{\varepsilon}|}{\gamma}.$$
Integrating, we find 
\begin{align*}
    \int_{\mathcal{D}_\gamma}|q_\varepsilon - q| \mathrm{d}X &\leq |\mathcal{D}|^{1/2}\gamma^{-1}\left(\|q\|_\infty \|\varepsilon\|_{L^2(\mathcal{D})} + \|\tilde{\varepsilon}\|_{L^2(\mathcal{D})} \right) \\
    &\leq |\mathcal{D}|^{1/2}\gamma^{-1}\left( \|q\|_\infty + C \sqrt{\frac{\|u\|_{H^2(\mathcal{D})}}{k_i^2} } \right)\|\varepsilon\|_{L^2(\mathcal{D})}, 
\end{align*}
where we used the result from Proposition \ref{prop: tikhonov}. Above $|\mathcal{D}|$ denotes the area of $\mathcal{D}$.
\end{proof}

We may now prove Proposition \ref{prop: stability}. 

\begin{proof}[\textbf{Proof of Proposition \ref{prop: stability}:}]
Let $k_\varepsilon = \sqrt{|1-q_\varepsilon|}k_i \geq 0$. As $k  \geq 0$, we have 
$$(k_\varepsilon - k)^2 \leq |k_\varepsilon^2 - k^2| = k_i^2|q_\varepsilon - q|.$$
Recalling that $d_\alpha$ (Definition \ref{def: dalpha}) has Lipschitz constant 
$M_{\mu + \alpha, \mu}$, we get
\begin{align*}
\int_{\mathcal{D}_\gamma} |d_\alpha(k_\varepsilon) -d_\alpha(k)|^2 \mathrm{d}X &\leq \ M_{\mu + \alpha, \mu}^2\int_{\mathcal{D}_\gamma} | k_\varepsilon - k|^2 \mathrm{d}X  \\
&=  k_i^2 M_{\mu + \alpha, \mu}^2\int_{\mathcal{D}_\gamma} | q_\varepsilon - q| \mathrm{d}X.
\end{align*} 
Writing $d_\alpha^\varepsilon = d_\alpha(k_\varepsilon)$ and using Proposition \ref{prop: q_stability} and the upper bound from \eqref{eq: lipschitz}, we find
\begin{equation*}
    \|d_\alpha^\varepsilon - d_\alpha  \|_{L^2(\mathcal{D}_\gamma)} \leq C\gamma^{-1/2}\left(\frac{3k_i}{\alpha(\alpha + \mu)}\right) \sqrt{\|\varepsilon\|_{L^2(\mathcal{D})}},
\end{equation*}
where the constant $C$ satisfies 
\begin{equation*}
C =  \left(|\mathcal{D}|^{1/2}\left( \|q\|_\infty + \tilde{C} \sqrt{\|q\eta\|_{H^2(\mathcal{D})}/k_i^2} \right)\right)^{1/2}.
\end{equation*}
\end{proof}

\begin{proof}[\textbf{Derivation of estimates in Proposition \ref{prop: simplification}}]

We first estimate the term $\frac{\nabla c}{c}$. Computing the gradient of the dispersion relation \eqref{eq: dispersion}, we get 
\begin{equation}
    \frac{\nabla k}{k} = - p(kd)\frac{\nabla d}{d}, \quad  \text{with} \quad p(t)\frac{t\text{sech}^2(t)}{\tanh(t) + t\text{sech}^2(t)}.
    \label{eq: gradient ratios}
\end{equation}
Next, we find that 
\begin{equation}
    \frac{\Delta k}{k} = (p(kd) - p^2(kd))\frac{|\nabla d|^2}{d^2} +p'(kd)kd(1-p(kd))\frac{|\nabla d|^2}{d^2} + p(kd)\frac{\Delta d}{d}.
    \label{eq: laplace ratios}
\end{equation}
Disregarding the constant term $\omega^2/c_0$, we write $c = \frac{g(kd)}{k^2}$, where $g(x) = \frac{1}{2}\left(1 +\frac{2x}{\sinh(2x)}\right).$  Then 
$$ \nabla c =  \frac{-2g(kd)\nabla k  +k g'(kd)(\nabla kd + k\nabla d)}{k^3}  $$ 
and so 
\begin{align*}
    \frac{\nabla c}{c} & = \frac{2g(kd)\nabla k +k^2dg'(kd)(\nabla k/k + \nabla d/d)}{g(kd)k} \\
                       &= \frac{\nabla k}{k} + \frac{kd g'(kd)}{g(kd)}(\nabla k/k + \nabla d/d). \\
\end{align*}
Consider now the function $s(t) = t\frac{g'(t)}{g(t)}$. It is bounded and ${s(0) = \lim_{x \to \infty} s(x) = 0}$, and numerical evaluation gives $ \max |s(t)| \leq 1/2$. Similarly, $|p(t)| \leq 1/2$, and by \eqref{eq: gradient ratios} we get that 
\begin{equation*}
    \frac{|\nabla c|}{c} \leq  \frac{|\nabla d|}{2d} + s(kd)\frac{3|\nabla d|}{2d}  \leq \frac{5|\nabla d|}{4d}.
\end{equation*}
Next, we note that    
$$\frac{\Delta c^{1/2} }{c^{1/2}} = - \frac{|\nabla c|^2}{4c^2} + \frac{\Delta c}{2c}.$$
For $\Delta c$, we have 
\begin{equation*}
    \Delta c = \frac{k^2 \Delta g }{k^4} - \frac{g \Delta k^2}{k^4} - \frac{2k^2\nabla c \cdot \nabla k^2}{k^4}.
\end{equation*}
For the last term we get 
$$  \frac{2k^2\nabla c \cdot \nabla k^2}{k^4c} = \frac{4\nabla c \cdot \nabla k}{kc} = \frac{4\nabla c}{c} \cdot \frac{\nabla k}{k} \leq \frac{5|\nabla d|^2}{2d^2}.  $$
For the second term we find that
\begin{equation*}
    \frac{g \Delta k^2 }{k^4c} = \frac{2 |\nabla k|^2 + k \Delta k}{k^2} \leq \frac{|\nabla d|^2}{d^2} + \frac{|\Delta k|}{k},
\end{equation*}
and for the first term
\begin{equation*}
    \frac{k^2\Delta g}{k^4c} = \frac{\Delta g }{g} = \frac{g''}{g}k^2d^2 |\nabla k/k + \nabla d/d|^2 + \frac{g'}{g}kd\left(\Delta k/k + 2\nabla k \cdot \nabla d /kd + \Delta d/d \right).  
\end{equation*}
Again, we have the coefficient $s(kd)$ in front of the last term and with $r(t) =\frac{g''(t)}{g(t)}t^2 $ we have $r(kd)$ in front of the first. Similar analysis shows that $|r(t)| < 11/10 $, and so
\begin{equation*}
    \frac{k^2\Delta g}{k^4c} \leq  3\frac{|\nabla d|^2}{d^2} + \frac{1}{2}(\Delta k/k + \Delta d/d).
\end{equation*}
From the analogous analysis of equation \eqref{eq: laplace ratios}, we get 
\begin{equation*}
    \frac{|\Delta k|}{k} \leq \frac{1}{2}\left(\frac{|\nabla d|}{d^2} + \frac{|\Delta d|}{d}\right).  
\end{equation*}
Summing it all up, we find that 
\begin{equation*}
   \frac{|\Delta c^{1/2}|}{ c^{1/2}} \leq 4\frac{|\nabla d|^2}{d^2} + \frac{3}{4} \frac{|\Delta d|}{d}.
\end{equation*}
We may now estimate $\|q\|_\infty$:
\begin{equation*}
    k_i^2|q(X)| = k_i^2 + |\tilde{k}^2| = k_i^2 + k^2 + \frac{|\Delta c^{1/2}|}{ c^{1/2}} \leq k_i^2 + k^2 + 4\frac{|\nabla d|^2}{d^2} + \frac{3}{4} \frac{|\Delta d|}{d}.
\end{equation*}
By invoking mild-slope assumption we have $|\nabla d|^2/d^2 \leq \ \delta^2 k^2$, and so 
\begin{equation}
    k_i^2\|q\|_\infty \leq k_i^2 +(1+4\delta)\|k\|_\infty^2 + \|\Delta d/d  \|_\infty.
    \label{eq: q_norm}
\end{equation}

\end{proof}

\bibliographystyle{plain}
\bibliography{references}

\end{document}